\journalname{Preprint}
\numberwithin{equation}{section}
\newcommand\ol[1]{\overline{#1}}
\newcommand\nl{\\[0.75\jot]} % \\ with some extra space
\newcommand\half{\tfrac{1}{2}}
\newcommand\shalf{\frac{1}{2}}
\newcommand{\eps}{\varepsilon}
\newcommand{\sig}{\sigma}
\renewcommand{\t}{\tau}
\renewcommand{\th}{\vartheta}
\newcommand\Laplace{\Delta}
\newcommand{\RR}{{{\mathbb{R}}\vphantom{|}}}
\newcommand{\dd}{{\mathrm{d}}}
\newcommand{\ee}{{\mathrm{e}}}
\newcommand{\ii}{{\mathrm{i}}}
\newcommand{\nice}[1]{\mathcal{#1}} %\mathscr
\newcommand{\nE}{\nice{E}}
\newcommand{\nG}{\nice{G}}
\newcommand{\nL}{\nice{L}}
\newcommand{\nO}{\nice{O}}
\newcommand{\nP}{\nice{P}}
\newcommand{\nS}{\nice{S}}
\renewcommand\,{\ensuremath{\mspace{1.5mu}}} %%%%%%%%%%%%%%%%%%
\newcommand\Order{{\mathscr{O}}}
\newcommand{\bcor}[1]{{#1}}
\newcommand{\ccor}[1]{{#1}}
\newcommand{\ncor}[1]{#1}
\newcommand\pdd{\partial_2}
\newcommand\EA{\nE_A}
\newcommand\EB{\nE_B}
\newcommand\EF{\nE_F}
\renewcommand\L[1]{{\nL}^{(#1)}}
\renewcommand\S[1]{\nS^{(#1)}}
\newcommand\s[1]{\bm{s}^{(#1)}}
\newcommand\pdt[1]{\tfrac{\partial^{#1}}{\partial t^{#1}}}
\newcommand\pdtau[1]{\tfrac{\partial^{#1}}{\partial \tau^{#1}}}
\DeclareMathOperator{\Tr}{Tr}
\begin{document}
%%%%%%%%%%%%%%%%%%%%%%%%%%%%%%%%%%%%%%%%%%%%%%%%%%%%%%%%%%%%%%%%%%
%%%%%%%%%%%%%%%%%%%%%%%%%%%%%%%%%%%%%%%%%%%%%%%%%%%%%%%%%%%%%%%%%%
\title{Adaptive splitting methods for nonlinear Schr\"{o}dinger equations in the semiclassical regime
\thanks{This work was supported by the Austrian Science Fund (FWF) under grants P24157-N13 and P21620-N13, the Vienna Science and Technology Fund (WWTF) under the grant MA14-002 and the \bcor{Doktoratsstipendium of the University of Innsbruck}.}}

%\subtitle{The and Strang splitting methods for nonlinear Schr\"{o}dinger equations in the semiclassical regime}

%\titlerunning{Short form of title}        % if too long for running head

\author{Winfried Auzinger, Thomas Kassebacher, Othmar Koch, and Mechthild Thalhammer}

%\authorrunning{Short form of author list} % if too long for running head

\institute{Winfried Auzinger \at
              Institute for Analysis and Scientific Computing, Vienna University of Technology, Wiedner Hauptstra{\ss}e \mbox{8-10}, A-1040 Wien, Austria \\
              \email{w.auzinger@tuwien.ac.at}           %  \\
%             \emph{Present address:} of F. Author  %  if needed
           \and
           Thomas Kassebacher \at
              Institut f{\"u}r Mathematik, Leopold-Franzens Universit{\"a}t Innsbruck, Technikerstra{\ss}e 13, A-6020 Innsbruck, Austria\\
              \email{thomas.kassebacher@uibk.ac.at}
           \and
           Othmar Koch \at
              Faculty of Mathematics, University of Vienna, Oskar-Morgenstern-Platz~1, A-1090 Wien, Austria \\
              \email{othmar@othmar-koch.org}
           \and
           Mechthild Thalhammer \at
              Institut f{\"u}r Mathematik, Leopold-Franzens Universit{\"a}t Innsbruck, Technikerstra{\ss}e 13, A-6020 Innsbruck, Austria\\
              \email{mechthild.thalhammer@uibk.ac.at}
}

%\date{Received: \today / Accepted: date}
% The correct dates will be entered by the editor
\maketitle

\begin{abstract}
\bcor{
The error behavior of exponential operator splitting methods for nonlinear Schr{\"o}dinger equations in the semiclassical regime is studied.
For the Lie and Strang splitting methods, the exact form of the local error is determined and the dependence on the semiclassical parameter is identified.
This is enabled within a defect-based framework which also suggests asymptotically correct a~posteriori local error estimators as the basis for adaptive time stepsize selection.
Numerical examples substantiate and complement the theoretical investigations.
}
\keywords{Nonlinear Schr{\"o}dinger equations
 \and Semiclassical regime
 \and Splitting methods
 \and Adaptive time integration
 \and Local error
 \and Convergence}
% \PACS{PACS code1 \and PACS code2 \and more}
\subclass{65J10 \and 65L05 \and 65M12 \and 65M15}
\end{abstract}

\section{Introduction}

\paragraph{Discretization of semiclassical Schr\"{o}dinger equations.}
\bcor{
The quantitative and qualitative behavior of space and time discretization methods for linear and nonlinear Schr\"{o}dinger equations has been extensively
studied in recent years.
As a small selection, we mention the contributions~\cite{Bader2014,BaoJinMarkowich2002,BaoJinMarkowich2003,Carles2011,Degond2007,Gradinaru2014,Jin2011,Yang2014} which are of relevance in particular in the context of semiclassical Schr\"{o}dinger equations.
}

\bcor{
The numerical approximation of nonlinear Schr\"{o}dinger equations in the semiclassical regime is a challenge, since in general the space
and time increments have to be chosen in dependence of the semiclassical parameter $0 < \eps < 1$ in order to correctly capture the solution behavior.
In particular, for an initial state~$u$ depending on the parameter~$\eps$ in the form of semiclassical wave packets, WKB states or focussing states, the solution~$\psi$ shows a highly oscillating behavior.
However, a precise characterization of the solution to semiclassical nonlinear Schr{\"o}dinger equations in dependence of the prescribed initial state is a question in the area of analysis that has not been resolved exhaustively yet.
}

\paragraph{Operator splitting methods.}
\bcor{
Exponential operator splitting methods for nonlinear Schr\"{o}dinger equations have been in the focus of interest of both theoretical physics and numerical analysis in the last years.
A comprehensive review of numerical methods for nonlinear Schr\"{o}dinger equations such as Gross--Pitaevskii equations is~\cite{baoetal13a}, which summarizes most of the studies conducted in this field.
Time-splitting methods in conjunction with spectral space discretizations are overall concluded to be the most successful approximations, with favorable stability and efficiency as well as norm and energy conservation.
In particular, the spectral accuracy of the space discretization is advantageous for Schr\"{o}dinger equations with regular solutions.
}

\bcor{
A first error analysis of the Lie and Strang splitting methods for nonlinear Schr{\"o}dinger equations is found in~\cite{bess02}.
The seminal work~\cite{lubich07} provides a rigorous convergence analysis for the Strang splitting method applied to the Schr{\"o}dinger--Poisson and cubic Schr{\"o}dinger equations;
extensions to Gross--Pitaevskii equations and high-order splitting methods as well as a study of the effect of spatial discretization by spectral methods are given for instance in~\cite{Gauckler,knth10a,th12}.
The question of long-time integration, with view on near-conservation of invariants under time discretization by splitting methods, is considered in~\cite{faou2012geometric,faou13,gaulub10b}, see also references given therein and~\cite{cangon13,dahowr09} for the analysis of related classes of methods.
}

\paragraph{Objective and outline.}
\bcor{
In this work, we study the cubic Schr{\"o}dinger equation involving a small but fixed parameter $0 < \eps < 1$, see Section~\ref{sec:Problem-setting}.
Our objective is to provide a rigorous a priori and a posteriori local error analysis for low-order splitting methods, the first-order Lie splitting and the second-order Strang splitting methods.
}

\bcor{
First considerations and numerical tests imply that the splitting solution correctly describes the qualitative behavior of the true solution only if the time stepsize $t > 0$ is in the range of the parameter~$\eps$, see also~\cite{BaoJinMarkowich2002,BaoJinMarkowich2003,descombes2013lie}.
Notably, the numerical simulation of the semiclassical limit ($\eps \to 0$) is not possible by the splitting approach.
}

\bcor{
A refined local error analysis provides a deeper understanding of the dependence on the time stepsize and the parameter, see Sections~\ref{sec:locerr-repr}-\ref{Lestim}.
However, as the obtained bounds involve certain Sobolev norms of the solution, whose precise dependence on~$\eps$ is in general unknown, an
appropriate a priori choice of the time stepsize to optimally balance computational cost and accuracy is a delicate issue.
}

\bcor{
Pessimistic bounds for the solution and its spatial derivatives would lead to a systematic underestimation of the time stepsize, at the expense of efficiency.
A remedy is the use of asymptotically correct a posteriori local error estimates for an automatic time step size control, see Section~\ref{sec:apost}.
}

\bcor{
The theoretical investigations are substantiated and complemented by numerical examples, see Section~\ref{sec:numerics}.
}

%\paragraph{Numerical observations.}
%\bcor{
%In the following, we anticipate the (asymptotic) local behavior of the Lie and Strang splitting methods, observed in numerical experiments, see Figs.~\ref{Fig:h_Aposteriori_6methods} and~\ref{Fig:h-e_Aposteriori_6methods}.
%}

\bcor{\paragraph{Theoretical results and connection to earlier work.} The present paper extends the work of~\cite{DescombesThalhammer,descombes2013lie}, where the local error in dependence of~$\eps$
is studied for higher-order splitting methods applied to linear equations
and for the first-order Lie splitting method applied to nonlinear equations, respectively.
In particular, we analyze the second-order Strang splitting method in detail, where
we adopt the defect-based approach of~\cite{Auz14,auzingeretal12a,auzingeretal13a}.
This enables us to derive a suitable local error representation with bounds of the form
\begin{align*}
\text{Lie splitting:} &\qquad C\, t^{2}\,  \| u \|_{H^{k}}\,,\\
\text{Strang splitting:}  & \qquad C\, t^{3}\, \eps^{-1}\, \| u \|_{H^{k}}\,.
\end{align*}}
%\[ C\, t^{p+1}\, \eps^{-\alpha}\, \| u \|_{H^{k}}\,,\qquad \text{$\alpha=0$ for $p$ even, $\alpha=1$ for $p$ odd}\,,\]
\bcor{Here, the explicitly stated dependence on $\eps$ is associated with the applied splitting method, additionally a solution dependence on $\eps$ may be present. In special cases, precise bounds for $\|u\|_{H^k}$ are known, however, the treatment of the general case is an open analytical question.}
\bcor{Extending~\cite{Auz14}, we analyze asymptotically correct a~posteriori error estimators for the purpose of adaptive time stepping, and verify their asymptotics.}

\section{The \bcor{cubic Schr{\"o}dinger equation} as a model problem}
\label{sec:Problem-setting}
\paragraph{Problem setting.}
The main aim of this paper is
to provide a rigorous a priori and a~posteriori local error analysis
for low-order splitting methods applied to the \bcor{cubic Schr{\"o}dinger equation} (NLS)
\begin{subequations}
\label{eq:Problemcomplete}
\begin{align}
\label{eq:Problem}
\ii\,\pdt{} \psi(t,x) &= -\eps\half\,\Laplace \psi(t,x)
            + \tfrac{1}{\eps}\,\big( U(x)+\th\,|\psi(t,x)|^2 \big)\,\psi(t,x)\,, \\
\psi(0,x)&= u(x)\,,
\end{align}
\end{subequations}
with solution $\psi: [0,T]\times \mathbb{R}^d \rightarrow \mathbb{C}$,
initial state $u:\mathbb{R}^d \rightarrow \mathbb{C}$, a quadratic harmonic potential
\begin{equation}
\label{eq:harmPot}
U: \ \mathbb{R}^d \rightarrow \mathbb{R}:~~ x  \mapsto \half\,\omega^2\,|x|^2\,,
\end{equation}
{\bcor and} a fixed positive constant $0<\eps<1$. \bcor{We choose $\th=1$ to obtain a defocussing nonlinearity, where a solution exists globally}. We focus on the relevant cases $d\in \{1,2,3\}$ and employ certain regularity conditions and boundedness assumptions on $\psi$ and $U\psi$.

\paragraph{Splitting.}
For discretization in time
we split the right-hand side of the PDE~\eqref{eq:Problem},
\begin{align*}
F(\psi)& = \,\ii\,\eps \half\,\Laplace\psi -\ii\,\tfrac{1}{\eps}(U+\th\,|\psi|^2)\,\psi\,,
\end{align*}
separating the two scalings with respect to $\eps$ into
\begin{subequations}
\label{eq:splittingop}
\begin{align}
\label{eq:ProblemA}
A(\psi)=&\,\ii\,\eps \half\,\Laplace\psi\,, \\
\label{eq:ProblemB}
B(\psi)=&-\ii\,\tfrac{1}{\eps}\big( U+\th\,|\psi|^2 \big)\,\psi\,.
\end{align}
\end{subequations}
The evolutionary operators $\EA$ and $\EB$ associated with these subproblems and initial state $u$ are given by
\begin{subequations}
\label{eq:E-def}
\begin{align}
\EA(t,u) &= \ee^{\,\ii\,\,t \frac{\eps}{2}\,\Laplace}\,u\,, \label{eq:EA-def} \\
\EB(t,u) &= \ee^{-\ii\,\,t\frac{1}{\eps}(U+\th\,|u|^2)}\,u\,. \label{eq:EB-def}
\end{align}
\end{subequations}
Representation~\eqref{eq:EA-def} follows from Stone's theorem, and the explicit representation~\eqref{eq:EB-def} is immediate.

For the numerical approximation we consider $s$\,-\,fold splitting
methods, where one splitting step $\nS$ has the general form\footnote{For notational convenience,
                                                                             the time increment is simply denoted by $ t $.}
\begin{equation} \label{eq:splitting-s-stages}
\nS(t,\cdot) := \EB(b_s\,t,\EA(a_s\,t,\ldots ,\EB(b_1\,t,\EA(a_1\,t,\cdot))))\,.
\end{equation}
The splitting coefficients $a_i,\,b_i\in \RR$ are defined by appropriate order conditions.
The numerical solution $\psi_n$ after $n$ time steps is given by
\begin{equation}
\psi_n = \underbrace{\nS(t, \nS(t, \ldots  ,\nS(t, }_{n \text{ times}} u)))\,.
\end{equation}
For the subsequent study the two-fold symmetric second-order Strang splitting method,
with ${a_1=a_2=\frac{1}{2}}$, $b_1=1$, $b_2=0$, see~\eqref{eq:Strang} below,
will be in the focus of interest.

\paragraph{Notation.}
For a nonlinear operator $G$ we denote by $G'$ its Fr{\'e}chet derivative.
Moreover, for the associated evolutionary operator $\nE_G(t,u)$,
its first and $ k $-th derivatives with respect to the initial value
$u$ are denoted by $\pdd \nE_G(t,u)$ and $\pdd^k \nE_G(t,u)$, respectively%
\footnote{The operator $ B $ from~\eqref{eq:ProblemB} is not complex Fr{\'e}chet differentiable due to the occurrence of the factor $ |\Psi|^2 $.
However, this is a merely formal problem, see the discussion in~\cite[Section~5.1]{Auz14}.}.

\section{General representation of the local error} \label{sec:locerr-repr}

Our main goal is a \bcor{better understanding of the local error behavior} of a Strang splitting step %~\eqref{eq:Strang}
for problem~\eqref{eq:Problemcomplete}, see Section~\ref{sec:GPE}.
To this end, in the present section we first recapitulate an exact representation
of the local error for a general nonlinear evolution equation
\begin{equation} \label{eq:FAB}
\pdt{} \psi = F(\psi) = A(\psi) + B(\psi)\,,\quad \psi(0) = u\,.
\end{equation}
This local error representation is based on~\cite[Section~4 and Appendix~C]{Auz14}.
Here we do not repeat all details of the derivation
but particularize for $A$ linear as is the case in~\eqref{eq:ProblemA},
and rearrange terms appropriately as a preparation for the subsequent estimates.

Since $A$ is a linear operator, we have
\begin{equation*}
A(u) = A\,u\,,\quad A'(u)\,v = A\,v\,,\quad \EA(t,u) = \EA(t)\,u\,,\quad \pdd\,\EA(t,u)\,v = \EA(t)\,v\,,
\end{equation*}
with the operator exponential $ \EA(t) = \ee^{\,\ii\,\,t \frac{\eps}{2}\,\Laplace} $, see~\eqref{eq:EA-def}.
A Strang splitting step takes the form
\begin{equation} \label{eq:Strang}
\nS(t,u) = \nS_{\text{Strang}}(t,u) = \EA(\half\,t)\,\EB\big(t,\EA(\half\,t)\,u) \big)
\end{equation}
with $ \EB(t,\cdot) $ from~\eqref{eq:EB-def}.

The flow defined by~\eqref{eq:FAB} is denoted by $\EF(t,u)$, and the local error of a splitting step is denoted by
\begin{equation} \label{eq:local-error-general}
\nL(t,u) =  \mathcal{S}(t,u) -\nE_F(t,u)\,.
\end{equation}
The representation of $ \nL(t,u) $ given in the sequel
indicates the expected local order $\Order(t^{p+1})$ of the Strang splitting scheme with $p=2$ \bcor{and in particular the dependence on the operators $A$ and $B$.}
This also will enable us to study the dependence on the parameter~$\eps$.

The approach adopted in~\cite{Auz14}
is based on an iterated application of (non)linear variation of constant formulas
involving the defect $ \S1(t,u) $ of the numerical solution $ \nS(t,u) $, defined according to
\begin{equation} \label{eq:defect-S1}
\pdt{} \nS(t,u) = F(\nS(t,u)) + \S1(t,u)\,,
\end{equation}
such that $ \nS(t,u) $ is the exact solution of the perturbed problem~\eqref{eq:defect-S1}.

\subsection{First expansion step}
Using Proposition~\ref{Grobner} (Gr{\"o}bner-Alekseev formula, see Appendix~\ref{Technical Tools}),
with $ z(t) = \nS(t,u) $, the local error~\eqref{eq:local-error-general} can be written as
\begin{equation} \label{eq:local-error-integral}
\nL(t,u) = \int_{0}^{t} \pdd\,\nE_F(t-\t,\nS(\t,u))\,\S1(\t,u)\,\dd\t\,. %  = \Order(t)\,.
\end{equation}
An expression for the defect $ \S1(t,u) $ which contains no time derivatives is derived
in Section~\ref{sec:S1herleitung},
\begin{align}
\label{eq:S1expr}
\S1(t,u) &= \EA(\half\,t)\,B(w) - B(\EA(\half\,t)\,w)  \\
& \quad {} + \half\,\EA(\half\,t)\big(\pdd\,\EB(t,v)\,A\,v - A\,\EB(t,v) \big)
               \,\big|_{\,\begin{subarray}~~v = \EA(\shalf t,u)  \\
                  ~w = \EB(t,\EA(\shalf t,u)) \end{subarray}}\,. \notag
\end{align}
Obviously, $\S1(0,u)=0$, hence $\S1(t,u)$ is at least of order $\Order(t)$ provided all
expressions involved are bounded. However, this does not yet reveal the expected
order~$ \Order(t^2) $.

\subsection{Second expansion step}
Further expansion of $\nL(t,u)$ via another application of the variation of constant formula results in
\begin{align} \label{eq:L-double-integral}
\begin{aligned}
\nL(t,u) &=
\int_{0}^{t} \int_{0}^{\t_1}
\Big\{  \pdd\,\EF(t-\t_2,\nS(\t_2,u))\,\S2(\t_2,u) \\
&\qquad\qquad\quad {} + \pdd^2\,\EF(t-\t_2,\nS(\t_2,u))\,
                                  \big(\S1(\t_2,u),\S1(\t_2,u) \big)\Big\}\,\dd\t_2\,\dd\t_1 \\
&=: \L{2}(t,u) + \L{1,1}(t,u) \,,
\end{aligned}
\end{align}
see~\cite{auzinger2014local}, involving the first- and second-order defect terms
\begin{subequations} \label{eq:S12def}
\begin{align}
\S1(t,u) &= \pdt{} \nS(t,u) - F(\nS(t,u))
            \quad \text{(see~\eqref{eq:defect-S1},\,\eqref{eq:S1expr})}\,,\label{eq:S1def} \\
\S2(t,u) &= \pdt\,\S1(t,u) - F'(\nS(t,u))\,\S1(t,u)\,. \label{eq:S2def}
\end{align}
\end{subequations}
Note that for~\eqref{eq:Problem},
\begin{equation*}
F'(u)v = -\eps\half\,\Laplace v + \tfrac{1}{\eps}\,\big(U\,v+\,u^2\,\ol{v}
              +2\,\th\, |u|^2\,v\big)\,.
\end{equation*}%
Again, $ \S2(t,u) $ can be expressed in a way such that no time derivatives occur.
Here we resort to a reformulation facilitating identification of the dominant terms, see Section~\ref{sec:S2herleitung},
\begin{align}
\label{eq:S2expr}
\begin{aligned}
\S2(t,u)
&= \big(\EA(\half\,t)\,B'(w) - B'(\EA(\half\,t)\,w)\,\EA(\half\,t) \big)\,\pdd\,\EB(t,v)\,A\,v  \\
& \quad {} + \big(A + B'(\EA(\half\,t)\,w)\big)\,\big(B(\EA(\half t)\,w) - \EA(\half t)\,B(w) \big)  \\
& \quad {} + \big(\EA(\half\,t)\,B'(w) - B'(\EA(\half\,t)\,w)\,\EA(\half\,t) \big)\,B(w) \\
& \quad {} + \tfrac{1}{4}\,\EA(\half\,t)
            \Big(A \big(A\,\EB(t,v) - \pdd\,\EB(t,v)\,A\,v \big)         \\
& \qquad\qquad\qquad{} - \big(A\,\pdd\,\EB(t,v) - \pdd\,\EB(t,v)\,A \big)\,A\,v  \\
& \qquad\qquad\qquad{} + \pdd^2\,\EB(t,v)(A\,v,A\,v)  \Big) \,
         ~\Big|_{\,\begin{subarray}~~v = \EA(\shalf t)\,u  \\
                    ~w = \EB(t,\EA(\shalf t)\,u) \end{subarray}}\,,
\end{aligned}
\end{align}
satisfying $ \S2(0,u)=0 $, hence $ \S2(t,u) = \Order(t) $
provided all expressions involved remain bounded. Thus, together with $ \S1(t,u) = \Order(t) $,
we have
\begin{align}
\nL(t,u) = \Order(t^3)\,.
\end{align}
Detailed integral expressions for $ \S1(t,u) $ and $\S2(t,u) $ are given
in Section~\ref{subsec:furtherexp} below. %is given in Appendix~\ref{lab:expansiondet}.

\subsection{Commutator expressions occurring in the subsequent analysis} \label{subsec:LinearA}
In the expansion of the local error, nonlinear commutators occur.
The commutator of two nonlinear vector fields $A,B$ is defined as\footnote{Here and in the following,
$u$ is a formal variable representing the argument of the respective operators.
This is not to be confused with the initial value, which has also been denoted by $u$.}
\begin{equation*}
[A,B](u) = A'(u)\,B(u) - B'(u)\,A(u)\,.
\end{equation*}
For a linear operator $A$, the relevant first- and second-order commutators are given by
\begin{subequations} \label{eq:ABcommutators}
\begin{align}
[A,B](u) &= A\,B(u)-B'(u)\,A\,u\,, \nl
[A,B'(v)](u) &= A\,B'(v)\,u - B'(v)\,A\,u\,, \nl
[B,[B,A]](u) &= B'(u)\big(B'(u)\,A\,u - A\,B(u) \big) -\big(B'(u)\,A\,u - A\,B(u) \big)'\,B(u)\\
	&= -2\,B'(u)\,A\,B(u) - B''(u)(A\,u,B(u)) + B'(u)\,B'(u)\,A\,u + A\,B'(u)\,B(u)\,, \notag \nl
[A,[A,B]](u)&= A\,\big(A\,B(u) -B'(u)\,A\,u \big) -\big(A\,B(u)-B'(u)\,A\,u \big)'\,A\,u \\
	&= A^2\,B(u) + B''(u)(A\,u,A\,u) + B'(u)\,A^2\,u - 2\,A\,B'(u)\,A\,u\,. \notag
\end{align}
\end{subequations}

\subsection{Explicit integral representation of the local error} \label{subsec:furtherexp}
The integrand in~\eqref{eq:L-double-integral} depends on the first- and second-order defect terms
$ \S1 $ and $ \S2 $, see~\eqref{eq:S1expr},\,\eqref{eq:S2expr}.
For a precise estimation of the local error, a more explicit representation of $ \S1 $ and $ \S2 $
is required. This is accomplished by converting them into integral form according to the
ideas from~\cite{Auz14}. The following integral representations are derived
in~Appendix~\ref{lab:expansiondet}.
\begin{align} \label{eq:S1integral}
\S1(t,u) &= \int_{0}^{t}
\Big\{ \half\,\EA(\half(t-\t))\,[A,B]\,\EA(\half \t)\,w \\
&\qquad\quad {} + \half\,\EA(\half\,t)\,\pdd\,\EB(t-\t,\EB(\t,v))\,[B,A](\EB(\t,v)) \Big\}\,\dd\t\,
                    \;\Big|_{\,\begin{subarray} ~~v =\EA(\half\,t)\,u \notag \\
                    ~w = \EB(t,\EA(\half\,t)\,u) \end{subarray}} \\
&=: \int_{0}^{t} \s1(t,\t,u)\,\dd\t = \Order(t)\,,\notag
\end{align}
and
\begin{subequations}
\label{eq:S2integral}
\begin{align}
\label{eq:S2integral1}
&\S2(t,u) = {} \\
&= \int_{0}^{t}
   \Big\{ -\half\,\EA(\half(t-\t))\,B''(\EA(\half\,\t)\,w) \notag \\
& \quad\qquad\quad {} \cdot \Big(A\,\EA(\half\,\t)\,w,\,\EA(\half\,\t) \cdot
                        \big(\pdd\,\EB(t,v)\,A\,v - A\,\EB(t,v) \big) \Big) \notag \nl
& \quad\qquad\, {} + \half\EA(\half(t-\t)) \notag \\
& \quad\qquad\quad {} \cdot \Big([A,[B,A]](\EA(\half\,\t)\,w) +
%& \quad\qquad\qquad {} +
[A,B'(\EA(\half\,\t)\,w)]\,\EA(\half\,\t)
                          \big(\pdd\,\EB(t,v)\,A\,v -A\,\EB(t,v) \big) \Big) \notag \\
& \quad\qquad {} + \EA(\half (t-\t)) \notag \\
& \quad\qquad\quad {} \cdot \Big(-B''(\EA(\half\,\t)\,w)\big(\big(\EA(\half\,\t)\,B(w)-B(\EA(\half\,\t)\,w) \big),\,A\,\EA(\half\,\t)\,w \big) \notag \\
& \quad\qquad\quad {} + \half\,[B,[B,A]]\big(\EA(\half\,\t)\,w \big)
                  +  [A,B'(\EA(\half\,\t)\,w)] \big(\EA(\t) B(w)- B(\EA(\half\,\t)\,w)  \big)  \Big) \notag \\
& \quad\qquad {} + \EA(\half\,t)\,\pdd\,\EB(t-\t,\EB(\t,v)) \notag \\
& \quad\qquad\quad {} \cdot \Big(\tfrac{1}{4}\,[A,[A,B]](\EB(\t,v))- \half\,[A,B'(\EB(\t,v))]\big(\pdd\,\EB(\t,v)\,A\,v -A\,\EB(\t,v)\big) \notag \\
& \quad\qquad\quad {} + \tfrac{1}{4}\,B''(\EB(\t,v))\big(\big(\pdd\,\EB(\t,v)\,A\,v
                         - A(\EB(\t,v))\big),\pdd\,\EB(\t,v)\,A\,v + A\,\EB(\t,v) \big) \Big) \Big\} \notag \\
& \qquad\quad \dd\t\;\Big|_{\,\begin{subarray}~~v =\EA(\half\,t)\,u  \\
                  ~w = \EB(t,\EA(\half\,t)\,u) \end{subarray}}\,. \notag
\end{align}
Analogously to~\eqref{eq:S1integral}
we define $\s2(t,\t,u)$ as the integrand in~\eqref{eq:S2integral1}, such that
\begin{equation}
\label{eq:S2integral2}
\S2(t,u) = \int_0^t \s2(t,\t,u)\,\dd\t\,.
\end{equation}
\end{subequations}
Several terms in~\eqref{eq:S2integral} cancel out at $t=0$,
e.g., $ \pdd\,\EB(t,v)\,A\,v - A\,\EB(t,v)$. Hence $\s2$ can be written as
\begin{equation}
\label{eq:s2dominant}
\s2(t,\t,u) = \tfrac{1}{4}\,[A,[B,A]](u) + \half\,[B,[B,A]](u)  + \Order(t)\,.
\end{equation}
These commutators will dominate the term $\S2$.

Combining these results we finally obtain an integral expression for $\nL(t,u)$
consisting of two parts according to~\eqref{eq:L-double-integral}.
Evaluating $ \nL(t,u) $ at $ t=0 $ reveals the dominant term in its Taylor expansion,
\begin{equation}
\label{eq:Strang err-dominant}
\nL(t,u) = \tfrac{t^3}{6}\, \s2(t,\t,u) + \Order(t^4)\,,
\end{equation}
with $\s2$ as in~\eqref{eq:s2dominant}.

\section{The local error for the cubic Schr{\"o}dinger equation} \label{sec:GPE}
In the special case of the NLS~\eqref{eq:Problemcomplete}
the operators $A,B$ are given explicitly in~\eqref{eq:splittingop},
for which we can explicitly calculate the terms appearing in the local error representation.

For $ A\,u = \ii\,\eps \half\,\Laplace u $ from~\eqref{eq:ProblemA} we have
\begin{equation*}
A'(u)\,v \equiv A\,v = \ii\,\eps\,\half\,\Laplace v\,,\quad A''(u)(v,w) \equiv 0\,.
\end{equation*}

\subsection{Auxiliary results for the nonlinear operator $B$} \label{sec:Baux}
In a subsequent $L^2$\,-\,estimate for the integral representation
of the local error, several derivatives of the nonlinear operator from~\eqref{eq:ProblemB},
\begin{equation*}
B(u)= -\ii\,\tfrac{1}{\eps}\,\big( U+\th\,|u|^2 \big)\,u
\end{equation*}
appear.

\paragraph{Fr{\'e}chet derivatives of \,$B$.}
Direct computation yields
\begin{align*}
B'(u)\,v &= -\ii\,\tfrac{1}{\eps}\,\big(U\,v+\th\,(2|u|^2\,v + u^2\,\ol{v}) \big)\,, \\
B''(u)(v,w) &= -2\,\ii\,\tfrac{1}{\eps}\th\,\big(\ol{u}\,v\,w + u\,\ol{v}\,w + u\,v\,\ol{w} \big)\,, \\
B'''(u)(v,w,z) &\equiv -2\,\ii\,\tfrac{1}{\eps}\th\,\big(\ol{v}\,w\,z + v\,\ol{w}\,z + v\,w\,\ol{z} \big)\,.
\end{align*}

\paragraph{Spatial derivatives associated with $B$.}
In the commutators to be analyzed below, spatial derivatives of
the functions $B(u)$, $B'(u)v$ and $B''(u)(v,w)$ occur.
We thus compute
\begin{align*}
\nabla B(u) &= -\ii\,\tfrac{1}{\eps}\,
               \Big((\nabla U)\,u + U (\nabla u)
                     + \th\,\big(2\,|u|^2\,(\nabla u) + u^2\,(\ol{\nabla u}) \big)
               \Big)\,,\nl
\nabla \big(B'(u)\,v\big) & = -\ii\,\tfrac{1}{\eps}\,
               \Big((\nabla U)\,v + U\,\nabla v \\
&\qquad\qquad {}      + \th\,\big(2\,|u|^2\,\nabla v + u^2\,(\ol{\nabla v})
                                + 2\,\ol{u}\,(\nabla u)\,v + 2\,u\,(\ol{\nabla u})\,v
                                + 2\,u\,(\nabla u)\,\ol{v} \big)
               \Big)\,,\nl
\nabla (B''(u)(v,w)) &= -2\,\ii\,\tfrac{1}{\eps}\th
              \big((\ol{\nabla u})\,v\,w + (\nabla u)\,\ol{v}\,w + (\nabla u)\,v\,\ol{w} + \ol{u}\,(\nabla v)\,w  \\
&\qquad\qquad~~ {} + u\,(\ol{\nabla v})\,w + u\,(\nabla v)\,\ol{w} + \ol{u}\,v\,(\nabla w)
                   + u\,\ol{v}\,(\nabla w) + u\,v\,(\ol{\nabla w})
              \big)\,.
\end{align*}
This implies
\begin{align*}
\Laplace B(u) &= -\ii\,\tfrac{1}{\eps}
               \Big((\Laplace U)\,u + 2\,(\nabla U) \cdot (\nabla u) + U (\Laplace u) \\
&\qquad\qquad {}  + \th\,\big(2\,\ol{u}\,(\nabla u) \cdot (\nabla u)
                            + 4\,u\,(\ol{\nabla u}) \cdot (\nabla u)
                            + 2\,|u|^2 (\Laplace u) + u^2 (\ol{\Laplace u}) \big)
               \Big)\,,
\end{align*}
\begin{align*}
\Laplace (B'(u)\,v) &= -\ii\,\tfrac{1}{\eps}
               \Big((\Laplace U)\,v + 2\,(\nabla U) \cdot (\nabla v) + U(\Laplace v) \\
&\qquad\quad {}      + \th\,\big(2\,|u|^2\,(\Laplace v) + u^2\,(\ol{\Laplace v})
                                + 4\,\ol{u}\,(\nabla u) \cdot (\nabla v)
                                + 4\,u\,(\ol{\nabla u}) \cdot (\nabla v) \\
&\qquad\qquad\quad {}                 + 4\,u\,(\nabla u) \cdot (\ol{\nabla v})
                                      + 2\,\ol{u}\,(\Laplace u)\,v + 2\,u\,(\ol{\Laplace u})\,v \\
&\qquad\qquad\quad {}                 + 2\,u\,(\Laplace u)\,\ol{v} + 2\,\ol{v}\,(\nabla u) \cdot (\nabla u)
                                      + 4\,v\,(\ol{\nabla u}) \cdot (\nabla u) \big)
               \Big)\,,
\end{align*}
\begin{align*}
\Laplace (B''(u)(v,w)) &=  -2\,\ii\,\tfrac{1}{\eps}\,\th
               \big((\ol{\Laplace u})\,v\,w + (\Laplace u)\,\ol{v}\,w + (\Laplace u)\,v\,\ol{w}
                     + \ol{u}\,(\Laplace v)\,w + u\,(\ol{\Laplace v})\,w + u\,(\Laplace w)\,\ol{w} \\
&\qquad\qquad~ {}    + \ol{u}\,v\,(\Laplace w) + u\,\ol{v}\,(\Laplace w) + u\,v\,(\ol{\Laplace w})  \\
&\qquad\qquad~ {}    + 2\,\ol{w}\,(\nabla u) \cdot (\nabla v)
                     + 2\,w\,(\ol{\nabla u}) \cdot (\nabla v) + 2\,w\,(\nabla u) \cdot (\ol{\nabla v}) \\
&\qquad\qquad~ {}    + 2\,\ol{v}\,(\nabla u) \cdot (\nabla w)
                     + 2\,v\,(\ol{\nabla u}) \cdot (\nabla w) + 2\,v\,(\nabla u) \cdot (\ol{\nabla w}) \\
&\qquad\qquad~ {}    + 2\,\ol{u}\,(\nabla v) \cdot (\nabla w) + 2\,u\,(\ol{\nabla v}) \cdot (\nabla w)
                     + 2\,u\,(\nabla v) \cdot (\ol{\nabla w})
               \big)\,.
\end{align*}
Higher derivatives of $B$, which appear in higher-order commutators,
can be expressed in a similar way but will not be listed here.

\subsection{Auxiliary results for the evolutionary operators $\EA$ and $\EB$} \label{Composition}
The evolutionary operators $\EA$ and $\EB$ are given by~\eqref{eq:E-def}.
For the nonlinear operator $\EB$, the Fr{\'e}chet derivatives with respect to the initial value $u$
are of increasing complexity:
\begin{align*}
\pdd\,\EB(t,u)\,v &= \ee^{-\ii\frac{t}{\eps} (U+\th\,|u|^2)} v
                     -\ii\,\tfrac{t}{\eps}\,\th\,\ee^{-\ii\frac{t}{\eps}(U+\th\,|u|^2)}\,u\,(\ol{u}\,v + u\,\ol{v})\,, \\
\pdd^2\,\EB(t,u)(v,w) &= -\ii\,\tfrac{t}{\eps}\,\th\,\ee^{-\ii\frac{t}{\eps}(U+\th\,|u|^2)}\\
&\qquad \cdot
                         \big(2\,u\,\ol{v}\,w + 2\,\ol{u}\,v\,w + 2\,u\,v\,\ol{w}
                               -\ii\,\tfrac{t}{\eps}\,\th\,u\,(\ol{u}\,v + u\,\ol{v})(\ol{u}\,w + u\,\ol{w}) \big)\,.
\end{align*}
For higher derivatives $\pdd^{k}$ the results look similar and involve higher powers
of $ \tfrac{t}{\eps} $. Furthermore,
\begin{align*}
\pdd\,\EB(t,u)\,\big|_{\,t=0} &= \mathrm{id}\,, \\
\pdd^2\,\EB(t,u)\,\big|_{\,t=0} &= 0\,.
\end{align*}
In the present situation, we may use the identity
\begin{equation*}
\nabla \EA(t)\,u = \EA(t) \nabla u\,,
\end{equation*}
thus
\begin{align*}
&\nabla \EB(t,\EA(t)\,u) = \ee^{-\ii\frac{t}{\eps}(U+\th\,|\EA(t)\,u|^2)} \\
&\qquad {} \cdot \Big(\EA(t)\nabla u
         -\ii\,\tfrac{t}{\eps}
           \big((\nabla U)\,\EA(t)\,u + \th\,|\EA(t)\,u|^2\,(\EA(t)\nabla u)
                 + \th\,(\EA(t)\,u)^2\,(\ol{\EA(t)\nabla u}) \big) \Big)\,,
\end{align*}
which implies
\begin{align*}
&\Laplace \EB(t,\EA(t)\,u) = \ee^{-\ii\frac{ t}{\eps}(U+\th\,|\EA(t)\,u|^2)} \Big(\EA(t)\,\Laplace u \\
& {} -\ii\,\tfrac{t}{\eps} \big(2\,(\nabla U) \cdot (\EA(t) \nabla u)
                                  + 2\th\,(\ol{\EA(t)\,u })(\EA(t)\nabla u) \cdot (\EA(t)\nabla u)
                                  + 3\th\,(\EA(t)\,u)\,|\EA(t) \nabla u |^2  \\
&\qquad {}                        + (\Laplace U)\,\EA(t)\,u + \th\,|\EA(t)\,u|^2\,\EA(t)\Laplace u
                                  + \th\,(\EA(t)\,u)^2\,\ol{\EA(t)\Laplace u } \big) \\
& {} -\tfrac{t^2}{\eps^2} \big((\nabla U) \cdot (\nabla U)\,\EA(t)\,u
                                  + 2\th\,(\nabla U) \cdot (\EA(t)\nabla u)\,|\EA(t)\,u|^2 \\
&\qquad {}                        + 2\th\,(\nabla U) \cdot (\ol{\EA(t)\nabla u})(\EA(t)\,u)^2
                                  + 2\th^2|\EA(t)\nabla u|^2\,|\EA(t)\,u|^2\,\EA(t)\,u  \\
&\qquad {}                        + \th^2 (\ol{\EA(t)\nabla u}) \cdot (\ol{\EA(t)\,\nabla u})(\EA(t)\,u)^3
                                  + \th^2 (\EA(t)\nabla u) \cdot (\EA(t) \nabla u)\,
                                    |\EA(t)\,u|^2\,\,\ol{\EA(t)\,u} \big) \Big)\,.
\end{align*}

\subsection{Representation of commutators} \label{commut}
The results derived above for the operators $A$ and $B$ %and $\EA,\,\EB$
yield explicit expressions for the relevant commutators from~\eqref{eq:ABcommutators}.
With $ \ii\,\eps\,(-\ii\,\frac{1}{\eps}) = 1 $ we obtain, for a general potential $ U $,
\begin{subequations}
\begin{align}
[A,B](u) &= (\nabla U) \cdot (\nabla u) + \half\,(\Laplace U)\,u
            + \th\,\big(u^2\,(\ol{\Laplace u}) + 2\,u\,|\nabla u|^2
            + \ol{u}\,(\nabla u) \cdot (\nabla u) \big)\,, \label{eq:AB-GPE} \nl
[A,B'(u)](v) &= \half\,(\Laplace U)\,v + (\nabla U) \cdot (\nabla v) \label{eq:AB'-GPE} \\
& \quad {}  + \th\,\big(u^2 (\Laplace \ol{v}) + (\Laplace u)\,\ol{u}\,v
                       + (\Laplace u)\,u\,\ol{v} + (\ol{\Laplace u})\,u\,v
                       + 2\,|\nabla u|^2\,v \notag \\
& \qquad\quad {}       + 2\,\ol{u}\,(\nabla u) \cdot (\nabla v) + 2\,u\,(\ol{\nabla u}) \cdot (\nabla v)
                       + (\nabla u) \cdot (\nabla u)\,\ol{v}
                       + 2\,u\,(\nabla u) \cdot (\ol{\nabla v}) \big)\,. \notag
\end{align}
Furthermore,
\begin{align}
&[B,[B,A]](u)  {} \label{eq:BBA-GPE} \\
& {} = -\ii\,\tfrac{1}{\eps}
        \Big(- u\,(\nabla U) \cdot (\nabla U) \notag \\
& \qquad\qquad {} + \th\,\big(2\,(\Laplace U)\,|u|^2\,u
                            - 4\,|u|^2\,(\nabla U) \cdot (\nabla u)
                            - 2\,U\,u^2\,(\ol{\Laplace u}) \big) \notag \\
& \qquad\qquad {} - \th^2\,\big(2\,|u|^4\,(\Laplace u)
                              - 2\,|u|^2\,u^2\,(\ol{\Laplace u})
                              + |u|^2\,\ol{u}\,(\nabla u) \cdot (\nabla u)
                              + 6\,|u|^2\,u\,|\nabla u|^2 \notag \\
&\qquad\qquad {} + u^3\,(\ol{\nabla u}) \cdot (\ol{ \nabla u}) \big)
        \Big)\,. \notag
\end{align}%
This expression comprises less critical terms with respect to $U$,
in particular it does not contain terms $ U(\nabla u) \cdot (\nabla u) $, or $ U(\Laplace u) $.

For $ [A,[A,B]](u) $, using the identities
\begin{align*}
\Laplace \big((\nabla u) \cdot (\nabla v)\big)
&= \big(\nabla\,(\Laplace u) \big) \cdot (\nabla v)
   + (\nabla u) \cdot \big(\nabla\,(\Laplace v) \big)
   + 2 \Tr \big((\nabla \nabla^T u) \cdot (\nabla \nabla^T v) \big)\,, \\
(\nabla u) \cdot \big(\nabla\,((\nabla u) \cdot (\nabla u)) \big)
&= 2\,(\nabla u)^T \cdot (\nabla \nabla^T u) \cdot (\nabla u)\,,
\end{align*}
we obtain\footnote{For the harmonic potential $ U $ from~(\ref{eq:harmPot}),
                   the terms $ \Laplace^2 U $ and $ \nabla (\Laplace U) $ vanish.}
\begin{align}
&[A,[A,B]](u)  {} \label{eq:ABA-GPE} \\
& {} = \ii\,\eps \Big(\tfrac{1}{4}(\Laplace^2 U) u +  (\nabla (\Laplace U)) \cdot (\nabla u)
+  \Tr \big((\nabla \nabla^T (U)) \cdot (\nabla \nabla^T (u)) \big) \notag \\
&\qquad\quad {} + \th\,\big(u^2\,(\ol{\Laplace^2 u})
                        + 4\,u\,(\nabla u) \cdot (\nabla (\ol{\Laplace u}))
                        + 2\,(\ol{\Laplace u}) (\nabla u) \cdot (\nabla u) \big) \Big) \notag \\
&\quad {} + \ii\,\eps\,\th\,\Big(\ol{u}\,\Tr\big((\nabla \nabla^T (u))^2\big)
                                  + 2\,u \Tr\big((\nabla \nabla^T (u)) \cdot (\ol{\nabla \nabla^T (u)})\big)
                                  + 2\,(\ol{\nabla u})^T \cdot (\nabla \nabla^T (u)) \cdot (\nabla u) \notag \\
&\qquad\qquad~ {} + 2\,(\nabla u)^T \cdot (\ol{\nabla \nabla^T (u)}) \cdot (\nabla u)
                  + 2\,(\nabla u)^T \cdot (\nabla \nabla^T (u)) \cdot (\ol{\nabla u}) \Big)\,. \notag
\end{align}
\end{subequations}

\section{$L^2$\,-\,estimate of the local error for the cubic Schr{\"o}dinger equation} \label{Lestim}
Since solutions to Schr{\"o}dinger equations are well-defined in the Hilbert space $L^2$,
we aim for an $L^2$\,-\,estimate of $\nL(t,u)$ on the basis of the general representation
from Section~\ref{sec:locerr-repr}. Proceeding from~\eqref{eq:L-double-integral},
the local error terms $ \L{2}(t,u) $ and $ \L{1,1}(t,u) $ will be estimated separately \bcor{in the situation of~\eqref{eq:Problem} below}.
The detailed derivations of these estimates are given in Appendix~\ref{lab:gronwall}.

\subsection{$ L^2 $\,-\,estimates for \,$ \L{2} $ and $ \L{1,1} $}
Consider
\begin{subequations}
\begin{equation} \label{eq:L2integral}
\L{2}(t,u) = \int_{0}^{t} \int_{0}^{\t_1} \pdd\,\EF(t-\t_2,\nS(\t_2,u))\,\S2(\t_2,u)\, \dd \t_2\, \dd \t_1
\end{equation}
with $ \S2(\t_2,u) = \int_{0}^{\t_2} \s2(\t_2,\t_3,u)\,\dd\t_3$ given by~\eqref{eq:S2integral}.
In combination with an estimate for $\s2$, the integrand in~\eqref{eq:L2integral} can be estimated by
\begin{align}
\label{eq:pdEF}
& \big\| \pdd\,\EF(t -\t_2,\nS(\t_2,u)) \cdot\S2(\t_2,u) \big\|_{L^2} \nl
& \quad {} \leq \exp \Big(C \int_{\t_2}^{t} \tfrac{1}{\eps}
                          |\th|\,\,\|\EF(\sig -\t_2,\nS(\t_2,u))\|_{H^2}^2\,\dd \sig \Big)
                          \Big(t \,\cdot \sup_{0 \leq \t_3 \leq \t_2} \| \s2(\t_2,\t_3,u)\|_{L^2}
                                + \tfrac{t}{\eps}\,C_\ast \Big)\,,
\notag
\end{align}
\end{subequations}
with a constant $C_\ast$ as indicated in Appendix~\ref{C1}.

The second contribution to the local error~\eqref{eq:L-double-integral} is
\begin{subequations}
\begin{equation} \label{eq:L11integral}
\L{1,1}(t,u) = \int_{0}^{t} \int_{0}^{\t_1}
 \pdd^2\,\EF(t-\t_2,\nS(\t_2,u))\,\big(\S1(\t_2,u),\S1(\t_2,u) \big)\Big\}\,\dd\t_2\,\dd\t_1
\end{equation}
with $ \S1(\t_2,u) = \int_{0}^{\t_2} \s1(\t_2,\t_3,u)\,\dd\t_3$ given by~\eqref{eq:S1integral}.
The calculations from Appendix~\ref{lab:gronwall-1} yield
\begin{align}
\label{eq:pd2EF}
\big\| \pdd^2 \nE_F(t-\t_2,\nS(\t_2,u))& (\S1(\t_2,u),\S1(\t_2,u)) \big\|_{L^2}     \\
& {} \leq \exp\Big( C \int\limits_{\t_2}^{t}
                                      \tfrac{1}{\eps}\,|\th|\,\|\nE_F(\sig-\t_2,\nS(\t_2,u)) \|_{H^2}^2\,\dd\sig \Big)\notag \\
&    \qquad {} \cdot \Big(\hat{C}\,\tfrac{t^3}{\eps}\,\|u \|_{L^2}\,
                      \big(\sup_{0 \leq \t_3 \leq \t_2}\|\s1(\t_2,\t_3,u) \|_{H^2}
                      + \tfrac{1}{\eps}\,C_\ast \big)^2
                      + \tfrac{t^2}{\eps}\,C_\ast\Big)
                      \,\,.
\notag
\end{align}
\end{subequations}
In this way, estimation of the local error reduces to estimates for $ \s1 $ and $ \s2 $ which
will be discussed in Sections~\ref{subsec:s1est} and~\ref{subsec:s2est}.

\subsection{$ L^2 $\,-\,boundedness of $U\EA(t)\,u$ for the harmonic potential $U$ from~\eqref{eq:harmPot}}
The product of the quadratic potential $ U(x)=\half\,\omega^2\,|x|^2 $ with the function $\EA(t)\,u$
as it appears in~\eqref{eq:S1raw} below and in the estimates from Section~\ref{Hm-stability},
is unbounded in general. In the following we work out requirements on $u$
which guarantee that $ \| U \EA(t)\,u \|_{L^2} $ remains bounded.

From the well-known identity
\begin{align*}
\EA(t)\,x \,\EA(-t) &= x+ \ii\, t\, \eps\, \nabla
\end{align*}
we first obtain
\begin{align*}
\| x\,\EA(t) u \|_{L^2} & \leq \| x u\|_{L^2} + \eps\, t\, \| u \|_{H^1}\,.
\end{align*}%
Furthermore, using the estimates from Appendix~\ref{app:A3} we find %the following estimate for the harmonic potential $U$,
\begin{align}
\| U \EA(t) u \|_{L^2}
& \leq \| U\,u \|_{L^2} + t\,\eps\,C_1 \,\|(\nabla U)\,u \|_{H^1} + t^2 \, \eps^2 \, C_2\, \| u \|_{H^2}\notag \\
& \leq \| U\,u \|_{L^2} + t\,\eps\,C_1 \,\big(\|U\,u \|_{L^2} + \| u \|_{H^2}\big)+ t^2 \, \eps^2 \, C_2\, \| u \|_{H^2},
\end{align}
with $C_1,\,C_2$ depending on the weight $\omega$ in $U$.
Clearly, the expression $\|(\Laplace U)\,u \|_{L^2}$ is bounded by $C\,\| u \|_{L^2}$
with a constant $C$ depending on $\omega$.

\subsection{$ H^m $\,-\,boundedness of a Strang splitting step} \label{Hm-stability}
In the estimates of $\S1$ and $\S2$, certain $H^m$\,-\,norms of the splitting
approximation ${\nS(t,u)=\EA(\half\,t) \cdot \EB(t,\EA(\half\,t)\,u)}$
and of the intermediate composition $\EB(t,\EA(\half\,t)\,u)$ occur.
Their boundedness with respect to the initial value $ u $ is critical for our analysis.
Due to the invariance property $\| \EA(t)\,u \|_{H^m} = \| u \|_{H^m}$,
the expressions $ \nS(t,u) $ and $w= \EB(t,\EA(\half\,t)\,u) $
show the same behavior in the $H^m$-norm.

Concerning $m=0$, both flows $\EA$ and $\EB$ conserve the $L^2$\,-\,norm, hence
\begin{equation}
\| \nS(t,u) \|_{L^2} = \| \EB(t,\EA(\half\,t)\,u)\|_{L^2} = \| u \|_{L^2}\,.
\end{equation}
The following estimates are based on the results from~Section~\ref{Composition}, making use of the estimates from Appendix~\ref{app:A2}.
For $m=1$ and $ m=2 $ we have
\begin{subequations}
\begin{equation}
\| \EB(t,\EA(\half \, t)\, u) \|_{H^1}
\leq \| u \|_{H^1} + C \cdot \Big( \tfrac{t}{\eps}\,
									\big( \|(\nabla U)\,u \|_{L^2} + |\th|\,\| u \|_{H^2} \| u \|_{H^1}^2\big) +
									t^2  \| u \|_{H^1} \Big)\,,
\end{equation}
and
\begin{align}
\| \EB(t,\EA(\half\,t)\,u)\|_{H^2}
&\leq \| u \|_{H^2}
      + C \cdot \Big(\tfrac{t}{\eps}\,\big(\| U\,u \|_{L^2}+ \| u \|_{H^2}
      + |\th|\,\| u \|_{H^2}^3 \big) + t^2 \| u \|_{H^2}  \\
& \qquad {}           + \tfrac{t^2}{\eps^2}\,
                        \big( \| U  u \|_{L^2} + | \th| \|U\,u \|_{L^2} \| u \|_{H^2}^2
                        	 + |\th|\,\|u\|_{H^2}^3 + |\th|^2 \|u\|_{H^2}^5\big) \notag \\
& \qquad  {} 		  +  \tfrac{t^3}{\varepsilon} \,
						\big( \| U\,u\|_{L^2} + \| u \|_{H^2} + |\th|\,\| u \|_{H^2}^3 \big) +
						t ^4 \| u \|_{H^2}
                \Big)\,. \notag
\end{align}%
\end{subequations}
Analogous estimates for higher Sobolev indices $m$
involve powers up to $\big(\tfrac{t}{\eps} \big)^m$ and higher Sobolev norms of
$u$ as well as $U\,u$.

\subsection{$ H^2 $\,-\,estimate for $\s1(t,\t,u)$} \label{subsec:s1est}
The integrand $ \s1 $ in the integral representation~\eqref{eq:S1integral} for $ \S1 $
can be estimated by
\begin{align*}
%\|\S1(t,u)\|_{H^2} &\leq \int_{0}^{t} \|\s1(t,\t,u)\|_{H^2}\,\dd\t\,,\\
\|\s1(t,\t,u)\|_{H^2} & \leq C\,\big\| [A,B](\EA(\half \t)\,\EB(t,\EA(\half\,t)\,u)) \big\|_{H^2}\\
&\quad {} + C\,\big\|\pdd\,\EB(t-\t,\EB(\t,\EA(\half\,t)\,u))\,[B,A](\EB(\t,\EA(\half\,t)\,u)) \big\|_{H^2}\,, \notag
\end{align*}
and further
\begin{align}
\label{eq:S1raw}
\big\| \s1(t,\t,u) \big\|_{H^2}&
\leq C\,\big(1 + \tfrac{t}{\eps}\,|\th|\,\| w \|_{H^2}^2 \big) \cdot \Big(\big( 1 + \tfrac{t}{\eps}|\th|\,\| u \|_{H^2}^2\big)^2\,\| [A,B](w)\|_{H^2} \\
&\qquad + t\, \eps \| w\|_{H^4}  + \tfrac{t}{\eps} \big( \| U \Delta w \|_{L^2} + \| (\nabla U )\cdot (\nabla w)\|_{L^2}  \notag\\
& \qquad \qquad \qquad  \qquad \quad +|\th|\,\| (\nabla U )\cdot (\nabla w) \|_{L^2}\| w \|_{H^4} \| w \|_{H^3}\big) \notag\\
&  \qquad + \tfrac{t^2}{\eps^2} \big( \| U (\nabla U )\cdot (\nabla w) \|_{L^2} + \| U w\|_{L^2} + |\th|\,\| U w\|_{L^2} \| w \|_{H^4} \| w \|_{H^3} \notag\\
& \qquad \qquad \quad  + |\th|\,\| U\,u \|_{L^2} \| u \|_{H^2} \| [A,B](w)\|_{H^2}\notag \\
& \qquad \qquad \quad + t\, \eps \| u \|_{H^2}^2 \| [A,B](w) \|_{H^2}\big)
\Big)\;
                           \Big|_{\,\begin{subarray}~~v = \EA(\shalf\,\t)\,u  \\
                                   ~w = \EB(\t,\EA(\shalf\,\t)\,u) \end{subarray}}\,,\notag
\end{align}
where $(\nabla U)\cdot(\nabla U) = C\,U$ and
\begin{align*}
\| [A,B](w)\|_{H^2}& \leq C\, \big( \| (\nabla U) \cdot (\nabla w)\|_{H^2}  + |\th|\,\| w\|_{H^4} \| w\|_{H^3}\| w\|_{H^2}  \big)\,.
\end{align*}
Inserting the expressions for $v$ and $w$ in~\eqref{eq:S1raw} we obtain
\begin{align}
&\sup_{0 \leq \t \leq t} \|\s1(t,\t,u)\|_{H^2} \label{eq:s1est} \\
& \quad{} \leq C_1\, \Big( \| U^2\,u \|_{L^2} +\| U\, u\|_{L^2}+\| u \|_{H^4} +   |\th|\,\| u \|_{H^4} \| u \|_{H^3} \| u \|_{H^2} \Big) \notag \\
& \qquad + C_2\, \tfrac{t}{\eps} \Big( \| U\,u\|_{L^2} + \| u \|_{H^2}+  |\th|\,\| U^2 u \|_{L^2} \| u \|_{H^2}^2 + |\th|\,\| U\,u \|_{L^2} \| u \|_{H^4} \| u \|_{H^3} \notag \\
& \qquad \qquad \quad  + |\th|\,\| u \|_{H^4} \| u \|_{H^2}^2  + |\th|^2 \| u \|_{H^4} \| u \|_{H^3} \| u \|_{H^2}^3
	+ \eps^2 \big( \| u \|_{H^4} + \| U^2 u \|_{L^2} \big) \Big) \notag \\
& \qquad + C_3\, \tfrac{t^2}{\eps^2} \Big( \| U^2 u \|_{L^2} + \| U\,u \|_{L^2}+
		\| u \|_{H^4}+ |\th|\,\| U^2\,u\|_{L^2} \| u \|_{H^2}^2 \notag \\
& \qquad \qquad \quad +|\th|\,\| U\, u \|_{L^2}^2 \| u \|_{H^2}+ |\th|\,\| U\,u\|_{L^2} \| u \|_{H^4} \| u \|_{H^3} \notag \\
& \qquad \qquad \quad   +|\th|\,\| u \|_{H^4}\| u \|_{H^2}^2+ |\th|^2 \| U\,u\|_{L^2} \| u \|_{H^3}^2 \| u \|_{H^2}^2 \notag \\
& \qquad \qquad \quad + |\th|^2 \|u \|_{H^4}\|u\|_{H^3}\|u\|_{H^2}^3   + |\th|^3 \| u\|_{H^4} \| u \|_{H^3} \| u\|_{H^2}^5 \notag \\
& \qquad \qquad \quad+ \eps^2 \big( |\th|\,\| U\,u\|_{L^2} \| u \|_{H^4} \| u \|_{H^2}  + |\th|\,\| u \|_{H^4} \| u \|_{H^3}^2\big) + \eps^4 \| u \|_{H^4} \Big) \notag \\
& \qquad + \bcor{\nO\big(\tfrac{t^3}{\eps^3}\big)\big(1+\Order(\eps^2)\big)}\,. \notag
\end{align}

\subsection{$ L^2 $\,-\,estimate for $\s2(t,\t,u) $} \label{subsec:s2est}
The integrand $ \s2 $ in the integral representation~\eqref{eq:S2integral} for $ \S2 $
involves multiple commutators and derivatives of the flows $\EA$ and $\EB$.
Here we only note the dominant terms according to~\eqref{eq:s2dominant} in more detail:
\begin{equation*}
\| \s2(t,\t,u) \|_{L^2} \leq \half\,\| [B,[B,A]](u) \|_{L^2} + \tfrac{1}{4}\,\| [A,[B,A]](u) \|_{L^2} + \bcor{\Order(\tfrac{t}{\eps^2}+ t)}\,,
\end{equation*}
where the dominant commutators, given by~\eqref{eq:BBA-GPE} and~\eqref{eq:ABA-GPE},
can be estimated by
\begin{align*}
\| [B,[B,A]](u) \|_{L^2}
& \leq C\, \tfrac{1}{\eps} \Big( \| U\,u \|_{L^2} +|\th|\,\| U\,u \|_{L^2}\, \| u\|_{H^4}\,\| u \|_{H^2} + |\th|\,\| u \|_{H^2}^3 + |\th|^2\, \|u\|_{H^2}^5\Big)\,,\\
\| [A,[B,A]](u) \|_{L^2}
& \leq C\,\eps\, \Big( \| u \|_{H^2} + |\th|\, \| u \|_{H^4}\,  \bcor{\| u \|_{H^2}^2} \Big)\,.
\end{align*}
A more refined estimate reads % up to $\nO(t)$ ???c
\begin{align}
&\sup_{0 \leq \t \leq t} \|\s2(t,\t,u)\|_{L^2} \label{eq:s2est}  \\
& \quad {} \leq  C_1\, \eps \Big(\| u \|_{H^2} +|\th|\, \| u \|_{H^4}\,\| u \|_{H^2}^2\Big) \notag \\
& \qquad {} + C_2\, \tfrac{1}{\eps} \Big( \| U\, u\|_{L^2} +
			|\th|\,\|U\, u\|_{L^2} \, \|u \|_{H^4} \, \| u \|_{H^2} +
			|\th|\,\| u \|_{H^2}^3 + |\th|^2\, \| u\|_{H^2}^5\Big) \notag \\
& \qquad {} + C_3\, t\, \Big(\| U^2\, u\|_{L^2} + \| U\, u \|_{L^2}+ \| u \|_{H^4} +
		 |\th|\,\| U\,u \|_{L^2}\, \| u\|_{H^4}\,  \|u\|_{L^2} \notag \\
& \qquad \qquad \quad  + |\th|\, \|u\|_{H^4}\, \| u \|_{H^2}^2 +
		 |\th|\, \| u\|_{H^3}^2 \, \| u \|_{H^2} +
		 |\th|^2\, \| u \|_{H^4}\, \| u \|_{H^2}^4\Big) \notag \\
&\qquad {} + C_4\,t\, \tfrac{1}{\eps^2}\, \Big( |\th|\, \| U \, u\|_{L^2}^2 \, \| u \|_{H^4} +
		|\th|\, \| U \, u\|_{L^2} \,\|u \|_{H^4} \, \| u\|_{H^2} \notag \\
& \qquad \qquad \qquad + |\th|^2\, \| U\, u\|_{L^2} \, \|u \|_{H^2}^4+ |\th|^3\, \| u\|_{H^2}^7\Big) \notag \\
& \qquad + \bcor{\Order\big( \tfrac{t^2}{\eps^3} + \tfrac{t^2}{\eps} + t^2\,\eps\big)}\,. \notag
\end{align}

\subsection{Resulting $ L^2 $\,-\,estimate for $ \nL(t,u) $}
Combining all previous estimates we conclude
\begin{align}
\label{eq:LocErr}
\big\| \nL(t,u) \big\|_{L^2}
&\leq \int_{0}^{t} \int_{0}^{\t_1} \Big\{ \int_{0}^{\t_2}
       \|  \pdd\,\EF(t-\t_2,\nS(\t_2,u))\,\s2(\t_2,\t_3,u) \|_{L^2}\,\dd\t_3 \notag \\
& \qquad\qquad {} +\Big\| \pdd^2\,\EF(t-\t_2,\nS(\t_2,u)) \cdot \Big(\int_{0}^{\t_2} \s1(\t_2,\t_3,u)\,\dd \t_3 \Big)^2 \Big\|_{L^2} \Big\}\,\dd\t_2\,\dd\t_1 \notag \\
& \leq\ \tilde{C}  \cdot t^3 \exp \Big(C\,\tfrac{t}{\eps}\,|\th|\,\sup_{0 \leq \chi \leq \sig \leq t} \|\EF(\sig,\nS(\chi,u)) \|_{H^2}^2   \Big)  {} \\
&\cdot  \sup_{0 \leq \t \leq t} \Big(\tfrac{t^2}{\eps}\,\| u \|_{L^2} \big(\| \s1(t,\t,u) \|_{H^2}
                                  + \tfrac{1}{\eps}\,C_\ast \big)^2
                                + \| \s2(t,\t,u) \|_{L^2}
                                +  \tfrac{1}{\eps}\,C_\ast + \tfrac{t}{\eps}\,C_\ast\Big)\,, \notag
\end{align}
where $\s1$ and $\s2$ have been estimated in Sections~\ref{subsec:s1est} and~\ref{subsec:s2est}.

\subsection{$ L^2 $\,-\,estimate for $ \nL(t,u) $ for the Lie splitting method}
For comparison, we recapitulate an $L^2$\,-\,estimate for the Lie splitting method
\begin{equation} \label{eq:Lie}
\nS(t,u) = \nS_{\text{Lie}}(t,u) = \EB(t,\EA(t)\,u)\,.
\end{equation}
Following~\cite{Auz14} we have, for $ A $ linear,
\begin{align}
\label{eq:errLiesplitting}
\begin{aligned}
\nL(t,u)  = \int_0^t \int_0^{\t_1}
\pdd \EF(t-\t_1,\nS(\t_1,u)) \pdd & \EB (\t_1-\t_2,\EB(\t_2,\EA(\t_1) u))  \\
& \qquad {}\cdot [B, A] (\EB (\t_2 ,\EA (\t_1)u)) \,\dd\t_2 \,\dd \t_1 \,.
\end{aligned}
\end{align}
Evaluating $ \nL(t,u) $ at $ t=0 $ reveals the dominant term in its Taylor expansion,
\begin{equation}
\label{eq:Lie err-dominant}
\nL(t,u) = \tfrac{t^2}{2}\,[B,A](u) +\bcor{ \Order\big(\tfrac{t^3}{\eps}+ t^3\, \eps\big)}\,.
\end{equation}
Proceeding similarly as for the case of the Strang splitting method we obtain
\begin{align}
\label{eq:LocErrLie}
\| \nL (t,u) \|_{L^2}  \leq&\,
\tilde {C} \cdot t^2 \exp \Big(C  \tfrac{t}{\eps}  |\th|\,\sup_{0 \leq \chi \leq \sig \leq t} \|\EF(\sig,\nS(\chi,u)) \|_{H^2}^2   \Big)  {} \\
& {} \cdot\Big( \| U \, u\|_{L^2} + \| u \|_{H^2} + |\th|\,\|u\|_{H^2}^3+ t\, \eps \, \| u\|_{H^2} \notag \\
& \quad {}+ \tfrac{t}{\eps}\big( \| U\, u\|_{L^2} + |\th|\,\| U \, u\|_{L^2} \| u\|_{H^2} \| u \|_{H^2} + |\th|\,\| u \|_{H^2} \| u \|_{H^1}^2 \big) \notag\\
& \quad {}+ t^2 \big( \| U\, u\|_{L^2} + \| u\|_{H^2} + |\th|\,\| u\|_{H^2}^3 \big)  \notag\\
& \quad {}+\tfrac{t^2}{\eps^2}\, |\th| \big( \| U\, u\|_{L^2} \| u \|_{H^2}^2 + \|U\, u\|_{L^2}^2 \| u\|_{H^2} + |\th|\,\| U \, u\|_{L^2} \| u\|_{H^2}^4\notag \\
&\quad\qquad \qquad  {} + |\th|\,\|u\|_{H^2}^5 + |\th|^2 \| u\|_{H^2}^7\big) +\tfrac{1}{\eps} C_\ast \Big)  \notag \\
&  +\bcor{\Order\big(\tfrac{t^5}{\eps^3} + \tfrac{t^5}{\eps}+t^5\, \eps\big)} \,.	\notag
\end{align}%
Here the constant $C_\ast$ discussed in Appendix~\ref{C1} again appears.
We observe that the dominant $ \Order(t^2) $ term of the local error
does not depend on $\eps$, which is a different behavior compared to the Strang splitting method.

\subsection{Higher-order methods}
For the Lie and Strang splitting methods, the leading term of the local error
has a very simple structure and is influenced by $[B,A]$ and $[A,[B,A]],\,[B,[B,A]]$, respectively
(see~\eqref{eq:Lie err-dominant},\,\eqref{eq:Strang err-dominant}).
It is also known that for a higher-order method the leading term in the Taylor
expansion of the local error comprises iterated commutators,
see for instance~\cite{auzinger2014local} for a precise discussion.

However, as we have seen, an {\em exact}\, (integral) representation of the local error becomes quite
complicated already for the Strang case, and it involves various derivatives of the nonlinear
operator $ B(u) $ and derivatives of commutator expressions.
Due to this significant increase in complexity,
a rigorous analysis of higher-order splitting methods~\eqref{eq:splitting-s-stages}
appears to be a major challenge
for the nonlinear case which we do not attempt to cope with
here.\footnote{See~\cite{auzingeretal13a} for an exact local error representation for the
                 linear case. This is combinatorially rather involved;
                 however, the role of iterated commutators dominating the error is clearly worked~out.}

Nevertheless we may infer information about the behavior of the dominant terms.
For a third-order scheme, for instance,
the local error will be dominated by the commutator expressions
\begin{equation*}
[A,[A,[A,B]]](u)\,,\quad [A,[B,[A,B]]](u) = [B,[A,[A,B]]](u)\,,\quad  [B,[B,[A,B]]](u)\,.
\end{equation*}
For the NLS we have $[B,[B,[A,B]]]=0$ and therefore no terms involving $\frac{1}{\eps^2}$ appear.
Moreover, all other third-order commutators have either no or a quadratic dependence on $\eps$.
This is comparable to our results for the Lie splitting method.
This means that especially for $\eps \ll 1$,
one would see the classical behavior or even a better
order behavior for more regular initial values.
Numerical observations are reported in Section~\ref{sec:numerics}.

\bcor{\paragraph{Summarizing remark.}
The theoretical analysis has shown a delicate dependence of the numerical error on the semiclassical parameter and the stepsize. Consequently for practical computations, automatic stepsize control seems more promising than an attempt to choose the time steps a priori. To this end, a posteriori estimators for the local time stepping error are required. In the following, we construct and analyze an asymptotically correct local error estimator based on the defect of the splitting solution.}

\section{An a~posteriori error estimator} \label{sec:apost}
In~\cite{Auz14} the following a~posteriori local error estimator for
$ s $\,-\,fold splitting methods~\eqref{eq:splitting-s-stages} of order $ p $ was proposed;
it is based on an Hermite quadrature approximation for the local error integral~\eqref{eq:local-error-integral}.
In Proposition~\ref{prop:defect-based-estimate} we state how this error estimator
can be practically computed via an appropriate evaluation of the defect.

\begin{proposition}[\cite{Auz14}] \label{prop:defect-based-estimate}
Let $v_i$, $w_i$ be defined as
\begin{equation}
v_i = \EA(a_i\,t, w_{i-1})\,,\quad  w_i = \EB(b_i\,t, v_i)\,,\quad 1 \leq i \leq s\,,
\end{equation}
with $w_0=u$ and $w_s = \nS (t,u)$,
and consider the local a~posteriori error estimator for a method of order $p$ defined by
\begin{equation}
\label{eq:ErrEst}
\nP(t,u) = \tfrac{1}{p+1}t \S{1} (t,u) \approx \nL(t,u),
\end{equation}
involving the defect
\begin{equation}
\label{eq:defect-apost}
\S{1}(t,u) = \pdt{} \nS (t,u) - F(\nS(t,u))\,.
\end{equation}
This can be computationally evaluated in the following way:
\begin{equation}
\S{1}(t,u) = g^{(s)} \circ g^{(s-1)} \circ \ldots \circ g^{(1)} \circ g^{(0)} - F(w_s)\,,
\end{equation}
where
\begin{align*}
g^{(i)} (z) &= b_i \, B(w_i) + \pdd\,\EB(b_i\, t, v_i) \pdd\,\EA(a_i\, t, w_{i-1})
\big[ a_i\, A(w_{i-1}) + z \big]\,,\quad i \geq 1\,, \\
g^{(0)} (z) &= 0\,.
\end{align*}
Hence, $ \nP(t,u) $ can be computed as
\begin{equation}
\label{eq:aposteriori}
\nP = \tfrac{1}{p+1}\,t \Big(g^{(s)} \circ  \ldots  \circ g^{(0)} - F(w_s) \Big)\,.
\end{equation}
\end{proposition}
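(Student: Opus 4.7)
The plan is to split the proposition into two independent assertions: (i) that $\nP(t,u) = \tfrac{t}{p+1}\,\S{1}(t,u)$ is an asymptotically correct local error estimator, and (ii) that $\S{1}(t,u)$ admits the nested evaluation via $g^{(s)} \circ \cdots \circ g^{(0)} - F(w_s)$. Both are essentially recorded in~\cite{Auz14}; the task is to assemble them cleanly in the notation used here.

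For (i), I would start from the exact Gr{\"o}bner--Alekseev representation~\eqref{eq:local-error-integral}, $\nL(t,u) = \int_0^t \pdd\,\EF(t-\t, \nS(\t,u))\,\S{1}(\t, u)\,\dd\t$, and approximate the integrand $h(\t) := \pdd\,\EF(t-\t, \nS(\t,u))\,\S{1}(\t, u)$ by Hermite-type quadrature matching the values $h(0) = 0$ and $h(t) = \S{1}(t,u)$ to leading order. Since $\S{1}(\t, u) = \Order(\t^p)$ for a classical order-$p$ method and $\pdd\,\EF(0, \cdot) = \mathrm{id}$, the unique consistent weight is $\tfrac{t}{p+1}$, and Taylor expansion of $h$ at $\t = 0$ shows the remainder to be $\Order(t^{p+2})$. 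I would simply cite~\cite{Auz14} for the detailed verification.

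The bulk of the work is (ii), which I would carry out by direct differentiation of the nested composition $\nS(t,u) = w_s$ defined by $w_0 = u$, $v_i = \EA(a_i\,t, w_{i-1})$, $w_i = \EB(b_i\,t, v_i)$. Writing $d_i := \pdt{}\,w_i$ with $d_0 = 0$, the product rule gives $\pdt{}\,v_i = a_i\,A(v_i) + \pdd\,\EA(a_i\,t, w_{i-1})\,d_{i-1}$. The key identity
\begin{equation*}
A(\EA(s, w)) = \pdd\,\EA(s, w)\,A(w),
\end{equation*}
which follows because both sides solve the variational equation $\phi' = A'(\EA(s,w))\,\phi$ with the common initial datum $A(w)$, lets me absorb $a_i\,A(v_i)$ into the Jacobian factor and rewrite $\pdt{}\,v_i = \pdd\,\EA(a_i\,t, w_{i-1})\,[a_i\,A(w_{i-1}) + d_{i-1}]$. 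Differentiating $w_i = \EB(b_i\,t, v_i)$ similarly yields $d_i = b_i\,B(w_i) + \pdd\,\EB(b_i\,t, v_i)\,\pdt{}\,v_i = g^{(i)}(d_{i-1})$. Iterating this recursion starting from $d_0 = g^{(0)} \equiv 0$ gives $\pdt{}\,\nS(t,u) = d_s = g^{(s)} \circ \cdots \circ g^{(0)}$, and subtracting $F(w_s)$ produces the stated formula for $\S{1}(t,u)$.

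The main technical obstacle is the failure of complex Fr{\'e}chet differentiability of $B$ caused by the $|\psi|^2$ factor, as flagged in the footnote of Section~\ref{sec:Problem-setting}. Following~\cite[Section~5.1]{Auz14}, I would interpret $\pdd\,\EB$ and $B'$ throughout as the $\RR$-linear Fr{\'e}chet derivatives obtained by splitting real and imaginary parts; with this convention the variational identity $B(\EB(s,w)) = \pdd\,\EB(s,w)\,B(w)$ still holds by the same ODE uniqueness argument, and all chain-rule manipulations above go through unchanged.
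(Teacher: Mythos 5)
Your proof is correct and follows the natural route that the cited reference~\cite{Auz14} takes: part~(i) is the Hermite quadrature argument for the local error integral, and part~(ii) is the recursive differentiation of the nested composition, with the variational identity $A(\EA(s,w)) = \pdd\,\EA(s,w)\,A(w)$ used to absorb the $a_i\,A(v_i)$ term into the Jacobian factor. The paper itself gives no proof here but merely cites \cite{Auz14}; your reconstruction of the recursion $d_i = g^{(i)}(d_{i-1})$ with $d_0 = 0$, the handling of the $\RR$-linear Fr\'echet derivative as in the paper's footnote, and the quadrature-consistency argument for the weight $\tfrac{t}{p+1}$ are all sound and consistent with the structure of the stated $g^{(i)}$.
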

Since the $g^{(i)}$ depend on $v_i$, $w_i$ and $w_{i-1}$ only,
they can be evaluated in parallel with the splitting scheme
without the need to store all intermediate values $v_i$, $w_i$.

For problem~\eqref{eq:Problemcomplete} with $ \eps=1 $, the asymptotical order
\begin{equation*}
\nP(t,u) - \nL(t,u) = \Order(t^{p+2})
\end{equation*}
has been proven in~\cite{Auz14} for the cases $p=1$ (Lie) and $p=2$ (Strang).
We will extend this study by incorporating the dependence on $\eps<1$,
while for higher-order methods we resort to numerical observations.
Understanding the asymptotical order behavior is essential
to ensure reliability of a time-adaptive method.

\subsection{Analysis of the deviation of the a~posteriori error estimator}

\paragraph{Lie Splitting.}
For the deviation $\nP - \nL$ of the Lie splitting error estimator an integral
representation has been derived in~\cite{Auz14},
\begin{equation}
\label{eq:Est-err}
\nP(t,u) - \nL(t,u) = \int_{0}^{t}
                      \big(K_1(\t,t)\,\nG_1(\t,t,u) - K_2(\t,t)\,\nG_2(\t,t,u) \big)\,\dd \t\,,
\end{equation}
where $K_1$ and $K_2$ are the first- and second-order Peano kernels associated with the error of
the underlying trapezoidal quadrature,
\begin{equation*}
K_1(\t,t) = \t-\half\,t\,,\quad  K_2(\t,t) = \half\,\t\,(t-\t)\,,
\end{equation*}
and
\begin{align*}
\nG_1(\t,t,u) &=
\Big\{ \pdd\,\nE_F(t-\t,\nS(\t,u)) \int_{0}^{\t} \pdd\,\EB(\t -\t_2,\EB(\t_2,v)) \\
& \qquad \cdot\Big(B''(\EB(\t_2,v)) \big(\widetilde{\nS}^{(1)}(\t_2,v) \big)^2 + [[B,A],A](\EB(\t_2, v))
              + [[B,A],B](\EB(\t_2,v))  \\
& \qquad\qquad {} + 2\,[B,A]'(\EB(\t_2,v))\,\widetilde{\nS}^{(1)}(\t_2,v) \Big)\,\dd \t_2\\
& \qquad {} + \pdd^2 \nE_F(t-\t,\nS(\t,u)) \big(\S1(\t,u) \big)^2 \Big\}~\Big|_{~v=\EA(\t)u}\,, \nl
\nG_2(\t,t,u)
&= \Big\{ \pdd\,\nE_F(t-\t,\nS(\t,u))
          \Big(\pdd^2\,\EB(\t,v)(A\,v,[B,A](v)) \\
&\qquad \qquad \qquad \qquad \qquad \quad {} + \big(\pdd\,\EB(\t,v)\,A - A\,\pdd\,\EB(\t,v) \big)\,[B,A](v) \Big) \\
& \qquad {} + \pdd^2 \nE_F(t-\t,\nS(\t,u))\,\big(\S1(\t,u),\pdd\,\EB(\t,v)\,[B,A](v) \big) \\
& \qquad {} + \pdd\,\nE_F(t-\t,\nS(\t,u))\,\pdd\,\EB(\t,v)\,[[B,A],A](v) \Big\}~\Big|_{~v=\EA(\t)u}\,.
\end{align*}
Here, $\S1$ is the defect~\eqref{eq:defect-apost} which satisfies the integral representation
\begin{align*}
\S1(t,u) &= \widetilde{\nS}^{(1)}(t,\EA(t,u))\,,\\
\text{with} \quad \widetilde{\nS}^{(1)}(t,v) &= \int_{0}^{t} \pdd\,\EB(t-\t,\EB(\t,v))\,[B,A](\EB(\t,v))\,\dd \t\,.
\end{align*}
Denoting
	\begin{align*}
	T_1 &= \int_{0}^{\t} \pdd\,\EB(\t -\t_2,\EB(\t_2,v))
	\Big(B''(\EB(\t_2,v)) \big(\widetilde{\nS}^{(1)}(\t_2,v) \big)^2  + [[B,A],A](\EB(\t_2, v)) \\
	& \qquad + [[B,A],B](\EB(\t_2,v)) + 2\,[B,A]'(\EB(\t_2,v))\,\widetilde{\nS}^{(1)}(\t_2,v) \Big)\,\dd \t_2\,, \\
	T_2 &= \pdd^2\,\EB(\t,v)(A\,v,[B,A](v))
	+ \big(\pdd\,\EB(\t,v)\,A - A\,\pdd\,\EB(\t,v) \big)\,[B,A](v) \\
	& \qquad {} + \pdd\,\EB(\t,v)\,[[B,A],A](v)\,,\\
	T_{22} &= \pdd\,\EB(\t,v)\,[B,A](v) \,,
	\end{align*}
	we have %a more compact representation for the estimate of~\eqref{eq:Est-err},

\begin{align*}
\| \nP&(t,u) - \nL(t,u) \|_{L^2} \\
& {}\leq C\, t^2 \big( \| \pdd\,\nE_F(t-\t,\nS(\t,u))\,T_1 \|_{L^2}+  \|\pdd^2 \nE_F(t-\t,\nS(\t,u)) (\S1(\t,u))^2\|_{L^2}\big)\\
 &\quad{} + C\, t^3 \big( \| \pdd\,\nE_F(t-\t,\nS(\t,u))\,T_2 \|_{L^2} + \|\pdd^2 \nE_F(t-\t,\nS(\t,u)) (\S1(\t,u), T_{22} )\|_{L^2}\big)\,.
\end{align*}
Estimating $\pdd\,\nE_F$ and $\pdd^2 \nE_F$ as in Appendix~\ref{lab:gronwall}, we obtain
\begin{align*}
&\| \nP(t,u) - \nL(t,u) \|_{L^2}\leq C_1\,
    e^{\big( C_2\,\tfrac{t}{\eps}\,|\th|\,\sup_{0 \leq \chi \leq \sig \leq t}
    \|\EF(\sig,\nS(\chi,u)) \|_{H^2}^2 \big)} \\
&\quad\cdot\sup_{0 \leq\t_2\leq \t \leq t} \Big(t^2 \| T_1\|_{L^2}+ \tfrac{t^3}{\eps}C_\ast + \tfrac{t^5}{\eps} \| u\|_{L^2} \big( \|\s1(\t,\t_2,u)\|_{L^2} + \tfrac{1}{\eps}C_\ast\big)^2 + \tfrac{t^4}{\eps}C_\ast + t^3 \| T_2 \|_{L^2}\\
& \qquad {} + \tfrac{t^4}{\eps}C_\ast + \tfrac{t^5}{\eps} \| u\|_{L^2} \big( \|\s1(\t,\t_2,u)\|_{L^2} + \tfrac{1}{\eps}C_\ast\big)(\| T_{22}\|_{L^2} + \tfrac{1}{\eps}C_\ast) + \tfrac{t^4}{\eps}C_\ast\Big)\,.
\end{align*}
Now we separately estimate,
\begin{align*}
\| T_1\|_{L^2} & \leq t\,\eps \big( \| u \|_{H^2} + |\th|\,\| u \|_{H^4} \| u \|_{H^3} \| u \|_{H^2} \big)\\
& \quad {} + \tfrac{t}{\eps}\big( \| U \, u \|_{L^2} + |\th|\,\| U u\|_{L^2} \| u \|_{H^4} \| u \|_{H^2} + |\th|\,\|u\|_{H^2}^3 + |\th|^2 \| u \|_{H^2}^5\big) \\
& \quad {} + t^2\big( \| U^2 u\|_{L^2}+ \| U \, u \|_{L^2} + \| u \|_{H^4} + |\th|\,\| u \|_{H^3}\| u \|_{H^2}^2 + |\th|\,\| U \, u \|_{L^2} \| u \|_{H^3} \| u \|_{H^2} \\
&\qquad \quad {} + |\th|^2 \| u\|_{H^4} \| u \|_{H^3} \| u\|_{H^2}^3\big)\\
& \quad {} +\tfrac{t^2}{\eps^2} \big( |\th|\,\| U \, u\|_{L^2} \| u \|_{H^4} \| u \|_{H^3} + |\th|^2 \| u \|_{H^4} \| u \|_{H^3} \| u \|_{H^2}^3 + |\th|^2 \| U\, u\|_{L^2} \| u \|_{H^2}^4 \\
& \qquad \quad {} + |\th|^3 \| u \|_{H^2}^7\big) +\bcor{\Order\big( \tfrac{t^3}{\eps^3} + \tfrac{t^3}{\eps} + t^3\,\eps\big)}\,,
\end{align*}
and \begin{align*}
\| T_2\|_{L^2} & \leq \eps\big(  \| u \|_{H^2} + |\th|\,\| u \|_{H^4} \| u \|_{H^3} \| u \|_{H^2}\big) \\
& \quad {} + t\, \big( \| U^2 u \|_{L^2} + \| U \, u \|_{L^2} + \| u \|_{H^4} + |\th|\,\| U\, u\|_{L^2}\| u \|_{H^4} \| u \|_{H^2} \\
& \qquad {} + |\th|\,\| u \|_{H^4} \| u \|_{H^2}^2+ |\th|^2 \| u\|_{H^4} \| u \|_{H^3} \| u\|_{H^2}^3 \big)\\
& \quad {} + \bcor{\Order\big(\tfrac{t^2}{\eps}+ t^2\, \eps\big)}\,.
\end{align*}
Concerning $T_{22}$ we refer to the representation of the commutator $ [B,A](v) $ in Section~\ref{commut}.

Combining these results, we obtain
\begin{align*}
&\| \nP(t,u) - \nL(t,u) \|_{L^2}\leq C_1\,t^3\,
    e^{\big( C_2\,\tfrac{t}{\eps}\,|\th|\,\sup_{0 \leq \chi \leq \sig \leq t}
    \|\EF(\sig,\nS(\chi,u)) \|_{H^2}^2 \big)}  {}\\
&\qquad \cdot{}\Big( \eps\,\big(\| u \|_{H^2} + |\th|\,\| u \|_{H^4} \| u \|_{H^3} \| u \|_{H^2}\big)\\
& \qquad \quad {} + \tfrac{1}{\eps}\big(  \| U \, u \|_{L^2} + |\th|\,\| U\, u\|_{L^2} \| u \|_{H^4} \| u \|_{H^2} + |\th|\,\|u\|_{H^2}^3 + |\th|^2 \| u \|_{H^2}^5 + C_\ast\big) \\
& \qquad \quad {} + t \big(  \| U^2 u\|_{L^2}+ \| U \, u \|_{L^2} + \| u \|_{H^4} + |\th|\,\| u \|_{H^3}\| u \|_{H^2}^2 + |\th|\,\| U \, u \|_{L^2} \| u \|_{H^3} \| u \|_{H^2} \\
& \qquad \qquad \quad {} + |\th|^2 \| u\|_{H^4} \| u \|_{H^3} \| u\|_{H^2}^3\big) + \tfrac{t}{\eps}\bcor{C_\ast}\\
& \qquad \quad {} + \tfrac{t}{\eps^2} \big( |\th|\,\| U \, u\|_{L^2} \| u \|_{H^4} \| u \|_{H^3} + |\th|^2 \| u \|_{H^4} \| u \|_{H^3} \| u \|_{H^2}^3 + |\th|^2 \| U\, u\|_{L^2} \| u \|_{H^2}^4 \\
& \qquad \qquad \qquad {} + |\th|^3 \| u \|_{H^2}^7\big)\Big)\\
& \qquad {} + \bcor{\Order\big( \tfrac{t^5}{\eps^3}+ \tfrac{t^5}{\eps}+ t^5\, \eps\big)}\,,
\end{align*}
\bcor{where the constant $C_\ast$ arising in Appendix~\ref{C1} appears again.}
\bcor{To sum up, for the Lie splitting method
\begin{align}
\label{eq:apost-Lie}
\| \nP_{\text{Lie}}(t,u)- \nL_{\text{Lie}}(t,u)\|_{L^2}
&{} \lesssim t^3\,\big(C_1\,\tfrac{1}{\eps} + C_2\,\eps \big) + t^4 \big(C_3\,\tfrac{1}{\eps^2} + C_4 \big)\,,
\end{align}
with constants $C_1$,\ldots,$C_4$ depending on the $H^4$\,-\,norm of $u$,
and additionally $C_1,\,C_3$ depending on $\|U\, u\|_{L^2}$, and $C_4$ depending on $\|U^2\, u\|_{L^2}$.}

This estimate is of a similar nature as the a~priori error bound for the Strang splitting method.
Therefore we expect the same asymptotical behavior of both the deviation of the a~posteriori error estimator
of the Lie splitting method and the local error of the Strang splitting method.
This claim is mainly based on the fact that both estimates are dominated by the commutators
$\|[[B,A],A](u) \|_{L^2}$ and $\| [[B,A],B](u)\|_{L^2}$.

\paragraph{Strang splitting.}
It was observed earlier that, to leading order, the deviation of the a~posteriori Lie error estimator
is dominated by $\|[[B,A],A](u) \|_{L^2}$ and $\| [[B,A],B](u)\|_{L^2}$.
For the case of Strang splitting, third-order commutators dominate. According to~\cite{Auz14},
\begin{equation}
\label{eq:err-est-Strang}
\nP(t,u) - \nL(t,u)
= \int_{0}^{t} K_3(\t,t)\,\pdtau{}
                 \big(\pdd\,\nE_F(t-\t,\nS(\t,u))\,\S3(\t,u) \big)\,\dd \t + \Order(t^5)\,,
\end{equation}
with the third-order Peano kernel $ K_3 (\t,t) = \tfrac{1}{6}\,\t\,(t-\t)^2 $
associated with the error of the underlying
Hermite quadrature rule defining $ \nP $, and the third-order defect
\begin{align*}
\S3(t,u) &= \Big\{ \EA(\half\,t)\,\pdd\,\EB(t,v)
                   \big( \half\,[[B,A],A](v) + \half\,[[B,A],B](v) \big) \\
& \qquad {} - \EA(\half\,t)\,\big( \tfrac{3}{4}\,[[B,A],A](w) + [[B,A],B](w) \big)
            + \nO(t)\Big\}~\Big|_{\,\begin{subarray}~~v = \EA(\shalf t)\,u  \\
                                                     ~w = \EB(t,\EA(\shalf t)\,u) \end{subarray}}\,.
\end{align*}
We now collect coefficients of $t^4$ in~\eqref{eq:err-est-Strang},
\begin{align*}
\nP(t,u) - \nL(t,u)
&= C\,t^4 \Big(\tfrac{1}{4}\,[A,[B,[B,A]]](u) + \tfrac{1}{4}\,[A,[A,[B,A]]](u) - [B,[B,[B,A]]](u) \\
& {} -\half\,[A,[B,[B,A]]](u) - \tfrac{3}{4}\,[B,[A,[B,A]]](u)
			 - \tfrac{3}{8}\,[A,[B,[B,A]]](u) \Big) + \Order(t^5)\\
&= C\,t^4 \Big(-\tfrac{1}{4}\,[A,[B,[B,A]]](u) - \tfrac{1}{8}\,[A,[A,[B,A]]](u) \Big) + \Order(t^5)\,.
\end{align*}
Here we have used the identity $[A,[B,[B,A]]](u) = [B,[A,[B,A]]](u)$ and the fact that,
for a cubic nonlinearity and a harmonic potential, $[B,[B,[B,A]]]=0$.\\

\bcor{The appearing commutators can be estimated as
$\| [A,[B,[B,A]]](u) \|_{L^2} \leq \eps^0\,C\,(\| u \|_{H^4})$
and $\| [A,[A,[B,A]]](u)\|_{L^2}\leq \eps^2\,C\,(\| u \|_{H^6})$, such that
\begin{align}
\label{eq:apost-Strang}
\| \nP_{\text{Strang}}(t,u)- \nL_{\text{Strang}}(t,u)\|_{L^2}
&{} \lesssim C\, t^4\,\big(1  + \eps^2 \big)  \,,
\end{align}
with $C$ depending in particular on $\| u \|_{H^6}$.}
\bcor{From~\eqref{eq:err-est-Strang} we deduce that the coefficients hidden in the $\Order(t^5)$ remainder are of the form $\bcor{\Order(\frac{t^5}{\eps}+ t^5\, \eps)}$, which is also observed numerically,
see Fig.~\ref{Fig:h_Aposteriori_6methods}.}

\section{Numerical results} \label{sec:numerics}
In our numerical experiments we solve a cubic Schr{\"o}dinger equation~\eqref{eq:Problemcomplete},
with either no potential $U=0$ or a quadratic potential
$U:\,\RR^d \rightarrow \RR$,\, $x \mapsto \half |x|^2$.
For $\th=1$ we speak of a defocussing nonlinearity, for $\th=-1$ we have a focussing nonlinearity.

For the computation of reference solutions
we have used a fourth-order scheme with 7~stages and a sixth-order scheme
with 11~stages from~\cite{Bla02}.
%since they were found to show  high numerical order and superior conservation of the invariants.
Besides these two and the Lie and Strang splitting methods we have tested a third-order scheme from~\cite{Ruth83},
a fourth-order scheme from~\cite{Yosh90},
and a new fifth-order scheme (see Table~\ref{tab:Auz5}).
\begin{table}[h!]
\caption{\label{tab:Auz5}Coefficients of a new 5-th order scheme obtained on the basis of order conditions
set up according to~\cite{auzinger2014local}.}
\begin{center}
		\begin{tabular}{| l | r || l |r|}
			\hline
	        $a_1^{\phantom{A}}$ & $0.475018345144539497$ & $b_1$ & $\,-\,0.402020995028838599$\\
	        $a_2$ & $0.021856594741098449$ & $b_2$ & $0.345821780864741783$\\
	        $a_3$ & $\,-\,0.334948298035883491$ & $b_3$ & $0.400962967485371350$\\
	        $a_4$ & $0.512638174652696736$ & $b_4$ & $0.980926531879316517$\\
	        $a_5$ & $\,-\,0.011978701020553904$ & $b_5$ & $\,-\,1.362064898669775625$\\
	        $a_6$ & $\,-\,0.032120004263046859$ & $b_6$ & $0.923805029000837468$\\
	        $a_7$ & $0.369533888781149572$ & $b_7$ & $0.112569584468347105$	\\
	        \hline	
		\end{tabular}
\end{center}
\end{table}

\subsection{Numerical order estimation}
\label{sec:num-order}
We compare the numerically observed local error behavior for different choices of $\eps$ as well as for a time stepsize $t$ \bcor{proportional to} this parameter ($t=\eps$).

For $ \eps = 1 $, we observe the classical order $\Order(t^{p+1})$.
For \ncor{small $\eps \approx 10^{-2}$}, a different local error behavior is in fact observed. \bcor{We also have noticed distinct asymptotics in dependence of the smoothness of the initial value.}
\paragraph{Smooth initial state.}
We may express \bcor{the} observed dependencies as
\begin{equation}
\|\nL(t,u)\|_{L^2} \approx t^{p+1} \cdot
\begin{cases}
\,C\,\big( 1 + \tfrac{t}{\eps} \big),  & p~\,\text{odd}\,, \\
\,C\,\tfrac{1}{\eps}\,,                & p~\,\text{even}\,.
\end{cases}
\end{equation}
\bcor{The dependence on $\tfrac{t}{\eps}$ for odd order methods is visible
in Fig.~\ref{Fig:h_Aposteriori_6methods} as a kink, while the dependence on $\tfrac{1}{\eps}$ for even order methods appears as an order reduction in Fig.~\ref{Fig:h-e_Aposteriori_6methods} for the specific choice $t=\eps$.}
%In practice, for small $\eps$,
%both the Lie and Strang splitting methods show the same error level,
%\begin{equation*}
%\|\nL(t,u)\|_{L^2} \approx C\,t^{3} \tfrac{1}{\eps}\,.
%\end{equation*}
\bcor{This reflects the theoretical results in~\eqref{eq:LocErr} and~\eqref{eq:LocErrLie} for smooth initial values,
\begin{align}
\label{eq:Lie_simplified}
\big\| \nL_{\text{Lie}} (t,u) \big\|_{L^2}
&\lesssim C\,\big(t^2 +  t^3\,(\tfrac{1}{\eps} + \eps) + t^4\,(\tfrac{1}{\eps^2} + 1) \big) \,, \\
\intertext{with $C$ depending on the $H^2$\,-\,norm of $u$ and on $\|U\, u\|_{L^2}$, and for the Strang splitting,}
\big\| \nL_{\text{Strang}} (t,u) \big\|_{L^2}
&\lesssim t^3\,\big(C_1\,\tfrac{1}{\eps} + C_2\,\eps \big) + t^4\,\big(C_3\,\tfrac{1}{\eps^2} + C_4 \big) \,,
\end{align}
with constants depending on the $H^4$\,-\,norm of $u$, as well as $C_1$, $C_3$ depending on $\|U\, u\|_{L^2}$, and $C_4$ depending on $\|U^2\, u\|_{L^2}$.}
%For odd order methods,
%see Fig.~\ref{Fig:h_Aposteriori_6methods} and Section~\ref{lab:Comparison}, this results in a fast convergence $\nO(t^{p+2})$, while the even order methods retain the order $\Order(t^{p+1})$.
%For intermediate values of $\eps$ the higher order is only observed for larger stepsize $t$,
%while for smaller $t$ the classical order
%is observed (see Fig.~\ref{Fig:h_Aposteriori_6methods}).}
%
\bcor{\paragraph{WKB initial state.} Oscillatory initial data given in WKB form leads to less regular solutions, which reveals a different aspect of the theoretical estimates.}

\bcor{For our numerical experiments we chose
\begin{align}
\label{eq:WKB}
\psi(x,y,0) = \ee^{-x^2}\cdot \ee^{-\ii/\eps\big(\log(\exp(x)+\exp(-x))\big)}\,,
\end{align}
which features oscillations in dependence of $\eps$.}

\bcor{The numerical observations in Fig.~\ref{Fig:WKB-figures} yield
\begin{align}
\|\nL(t,u)\|_{L^2} \approx C\, t^{p+1} \,\tfrac{1}{\eps}\,,
\end{align}
also in accordance with~\cite{descombes2013lie}. The theoretical results~\eqref{eq:LocErr} and~\eqref{eq:LocErrLie} would imply too pessimistic estimates, since higher powers of $\frac{1}\eps$ are introduced by the estimates in Sobolev spaces. In particular for this situation where the
time stepsize would be underestimated a priori, the use of automatic stepsize control is indicated.}

\bcor{\paragraph{Global error observations.} The influence of higher order terms in the estimates is observed  in numerical experiments more distinctly in the global error. Thus in Fig.~\ref{Fig:Global-err}, we observe that for small $\eps$, the effects for the local error may sum up critically in dependence of $\eps$. For $t\lesssim \eps<1$, the classical global order $\Order(t^{p})$ is observed, for $\eps \lesssim t<1$, the contributions of terms involving higher powers of $\frac{t}{\eps}$ imply stagnation at a constant value (see Fig.~\ref{Fig:Global-err}).}

  \begin{figure}[h!]
   	\centering
       \includegraphics[width=0.48\textwidth]{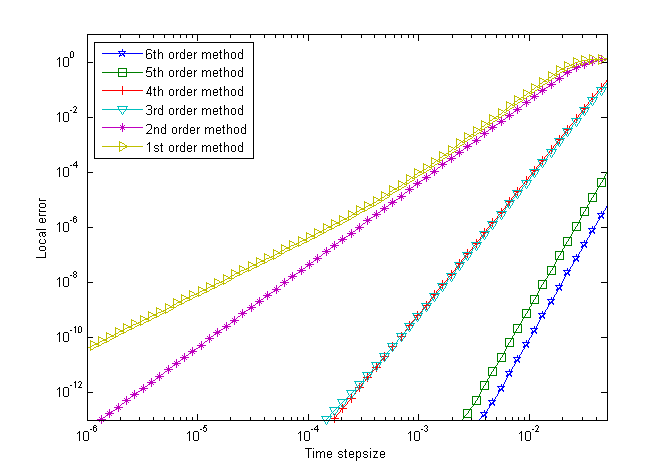}
       \includegraphics[width=0.48\textwidth]{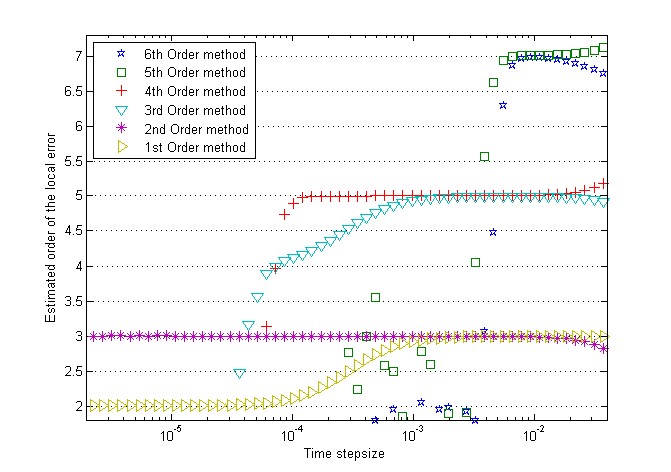}\\  	
       \includegraphics[width=0.48\textwidth]{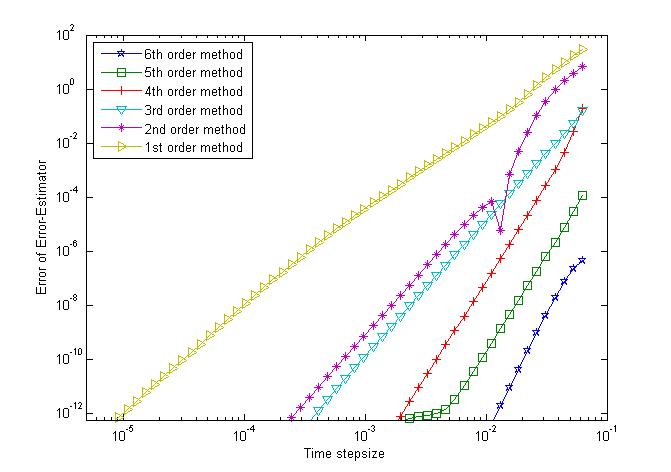}
       \includegraphics[width=0.48\textwidth]{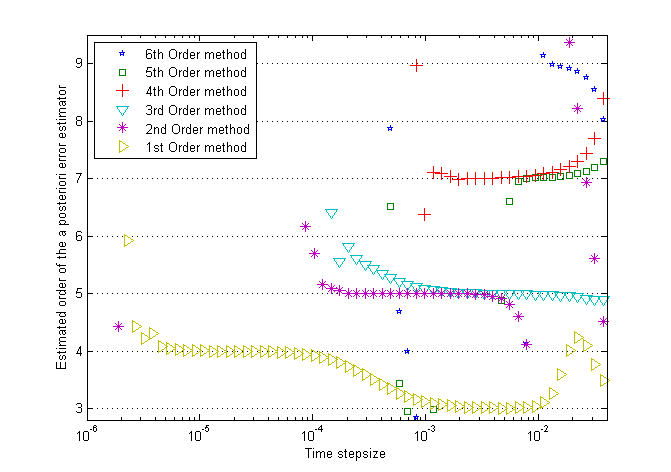}
   	\caption{\textbf{$t$\,-\,dependence of the local error and of the deviation of the a~posteriori error estimator.}
             \textbf{First row:} The plot on the left shows the empirical local error
             for several splitting methods, while the plot on the right shows the associated observed orders.
             It can be seen that for the Lie splitting \bcor{and other odd-order} methods,  the order decreases by one,
             starting below $t\approx \eps$.
             The even order methods are not affected by this order reduction.
    \textbf{Second row:} Here, the plot on the left shows the estimated deviation of the
             a~posteriori error estimator for several splitting methods and again,
             the plot on the right shows the associated observed orders.
             The odd order methods change their behavior,
             but here we observe an improved order for $t < \eps$.
             Moreover, the even order methods perform even one order better than expected,
             $\Order(t^{p+3})$.
             For all computations the initial condition was a shifted Gaussian at
             $2\cdot10^4$ gridpoints with a fixed parameter $\eps=\bcor{10^{-2}}$.}
   	\label{Fig:h_Aposteriori_6methods}
   \end{figure}

  \begin{figure}[h!]
   	\centering
       \includegraphics[width=0.48\textwidth]{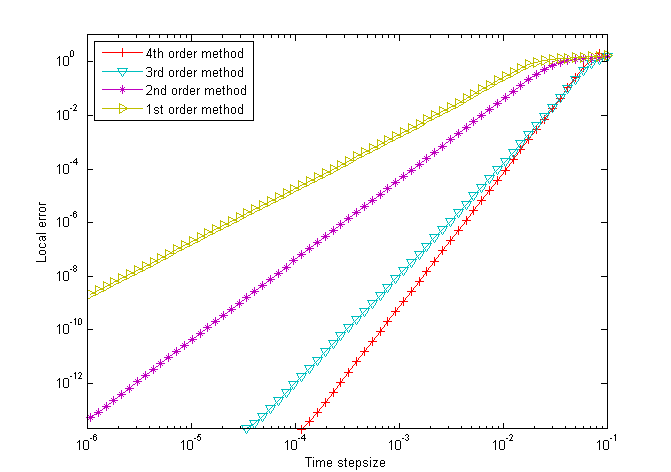}
       \includegraphics[width=0.48\textwidth]{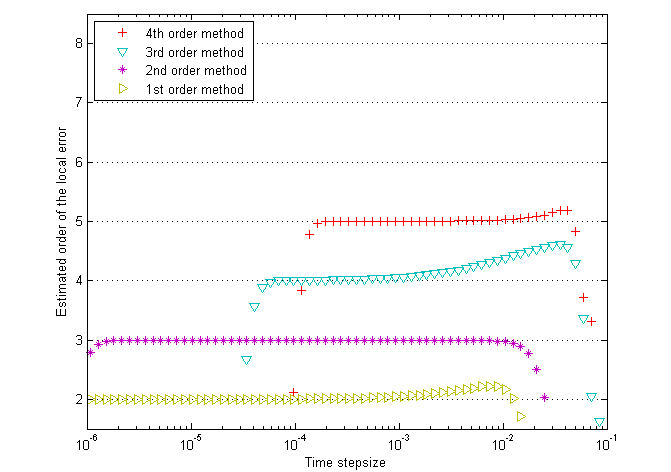}\\  	
       \includegraphics[width=0.48\textwidth]{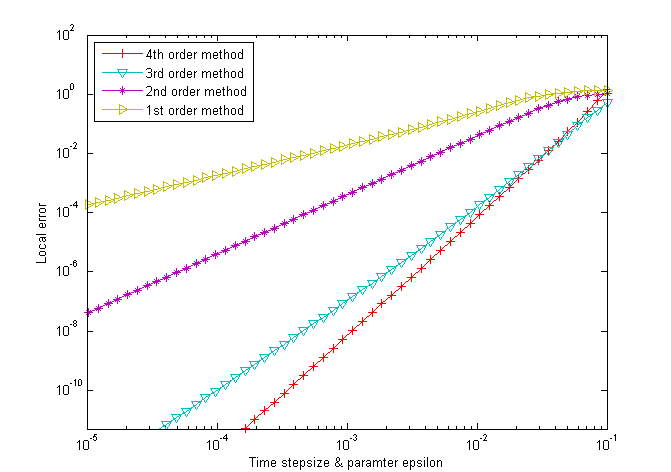}
       \includegraphics[width=0.48\textwidth]{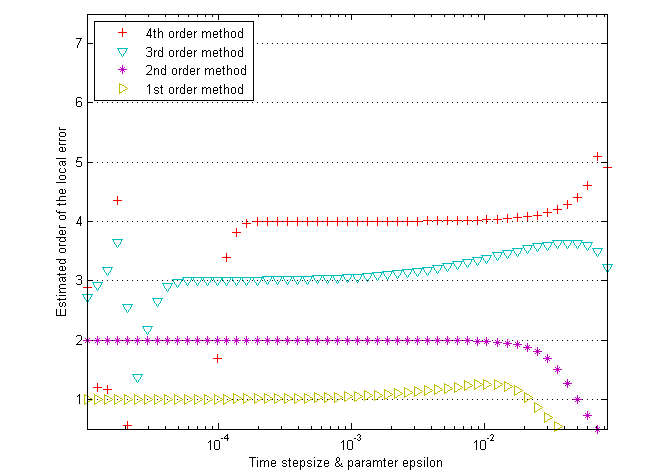}
   	\caption{\bcor{\textbf{Dependence of the local error for WKB initial values.}
             \textbf{First row:} $t$\,-\,dependence. The plot on the left shows the empirical local error
             for several splitting methods, while the plot on the right shows the associated observed orders.
             For all methods the order $\Order(t^{p+1})$ can be observed (We have chosen $\eps=10^{-2}$ and $2\cdot10^4$ gridpoints.).
    \textbf{Second row:} $(t=\eps)$\,-\,dependence. The plot on the left shows the empirical local error
                 for several splitting methods, while the plot on the right shows the associated observed orders.
                 It can be seen that for all methods the order decreases by one,
                 starting below $t\approx \eps$.
                 To resolve the error also for $\eps=10^{-5}$, $4\cdot10^5$ gridpoints have been used.
             For all computations the initial condition is given in~\eqref{eq:WKB}.}}
   	\label{Fig:WKB-figures}
   \end{figure}

\begin{figure}[h!]
   	\centering
       \includegraphics[width=0.48\textwidth]{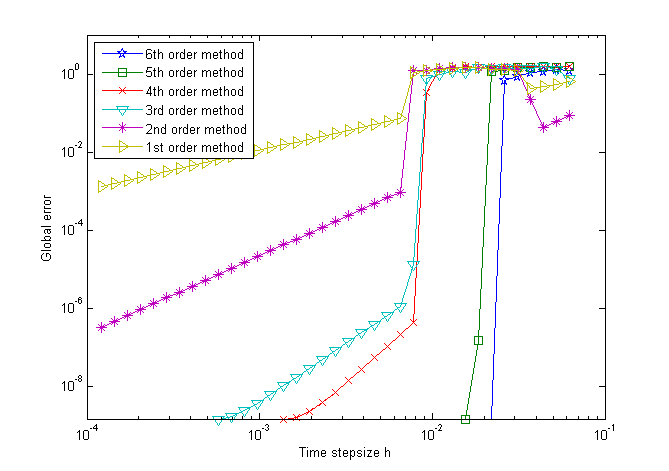}
       \includegraphics[width=0.48\textwidth]{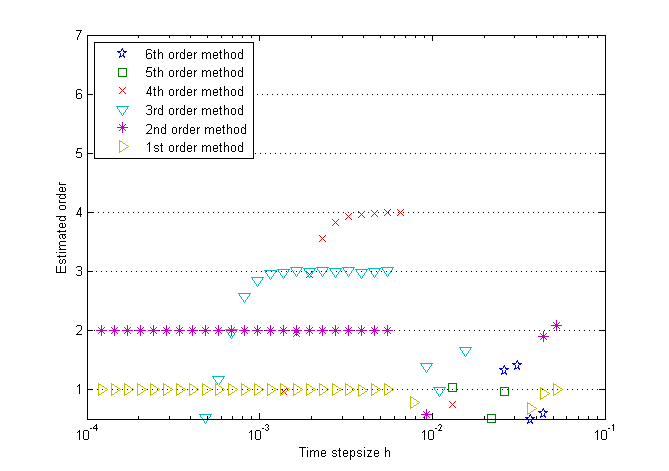}
   	\caption{\bcor{%Caption missing, $T_{end}=0.5$, $\eps=1/250$, Gaussian initial value. Plateau observed for $t>\eps$}}
                   \textbf{$t$-dependence of the global error for Gaussian initial values.} For
                   $\eps=1/250$, the error initially stagnates for $t\gtrsim \eps$ and resumes the
                   classical order for smaller $t\lesssim \eps$, as likewise observed for WKB initial values. A total integration time of $T=0.5$ and $2\cdot 10^4$ gridpoints in space have been used.}}
   	\label{Fig:Global-err}
   \end{figure}

\subsection{Observed behavior of the deviation of the error estimator}
For $\eps\approx 1$ the deviation of the local error estimator~~\eqref{eq:ErrEst} associated
with a method of order~$p$ shows an $\Order(t^{p+2})$ behavior,
but \ncor{for smaller values of $\eps$,}
we have observed a different behavior for some methods, as shown in
Figs.~\ref{Fig:h_Aposteriori_6methods} and ~\ref{Fig:h-e_Aposteriori_6methods}.

In detail, the following dependencies have been observed for smooth initial values independent of $\eps$,
 \begin{equation}
 \label{eq:aposterioridependence}
 \|\nP(t,u) - \nL(t,u)\|_{L^2} \approx t^{p+2} \cdot \begin{cases}
 \,C\,\frac{1}{\eps}\,\frac{t}{\eps + t}\,, & p~\,\text{odd,} \\
 \,C\,\frac{1}{\eps}\,t\,,                  & p~\,\text{even.}
 \end{cases}
 \end{equation}
\bcor{The functional form $\frac{t}{\eps +t}$ above is inferred from the kink observed in the empirical convergence order in Fig.~\ref{Fig:h_Aposteriori_6methods}. The increased order $\Order(t^{p+3})$ for even order methods is present in Fig.~\ref{Fig:h_Aposteriori_6methods} and the factor $\frac{1}{\eps}$ is apparent from Fig.~\ref{Fig:h-e_Aposteriori_6methods}.}

\bcor{The theoretical result~\eqref{eq:apost-Lie} shows a deviation of the a posteriori estimator of order $\nO(t^{p+2})$ for the Lie splitting method. In contrast, an increased order in our numerical experiments occurs in the same regime as the order reduction for the local error behavior, showing a spurious improvement to order $\nO(t^{p+3})$.}

\bcor{For the choice $t=\eps$ our observations in Fig.~\ref{Fig:h-e_Aposteriori_6methods} reflect the analytical results~\eqref{eq:apost-Lie} and~\eqref{eq:apost-Strang} ($\Order(t^{p+1})$ for the Lie splitting method, $\Order(t^{p+2})$ for the Strang splitting method).}

\bcor{Numerical experiments, not reported here, show that for WKB initial values the observed order reduction analogous to Section~\ref{sec:num-order} is
\[ \|\nP(t,u) - \nL(t,u)\|_{L^2}\approx C\, t^{p+2} \,\tfrac{1}{\eps}\,.\]
Again, the theoretical estimates~\eqref{eq:apost-Lie} and~\eqref{eq:apost-Strang} are too pessimistic for this case.}

%\bcor{Here we can see the asymptotics that we observe in the numerical results
%as well (see Figs.~\ref{Fig:h_Aposteriori_6methods} and~\ref{Fig:h-e_Aposteriori_6methods}):
%\smallskip\noindent
%For \ncor{$\eps$ fixed, $t$ varying},
%\begin{itemize}
%\item the deviation of the \textbf{Lie} error estimator might behave as $C\,t^3$
%      due to the coefficient $\tfrac{1}{\eps}$. However, due to the factor $\tfrac{1}{\eps^2}$,
%      an asymptotical dependence of $C\, t^4$ is also possible.
%      In the numerical results (see~Fig.~\ref{Fig:h_Aposteriori_6methods})
%      this occurs in the range where the local error of the Lie splitting method behaves as $C\,t^2$;
%\item the deviation of the \textbf{Strang} error estimator behaves as $C\,t^5$
%      due to the coefficient $\tfrac{1}{\eps}$.
%      An effect to be attributed to $C\, t^4$ might only be observable for very small stepsize $t$.
%\end{itemize}
%For \ncor{$ \eps = t $ varying},
%\begin{itemize}
%\item the deviation of the \textbf{Lie} error estimator behaves as $C\,t^2$;
%\item the deviation of the \textbf{Strang} error estimator behaves as $C\,t^4$.
%\end{itemize}
%In both cases the higher-order terms comprise an additional factor $t^2$
%and therefore they are not observable for small stepsizes.
%}

  \begin{figure}[h!]
    	\centering
        \includegraphics[width=0.48\textwidth]{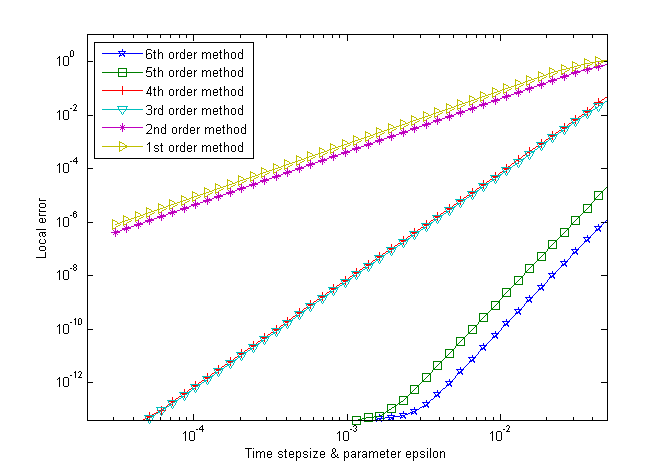}
        \includegraphics[width=0.48\textwidth]{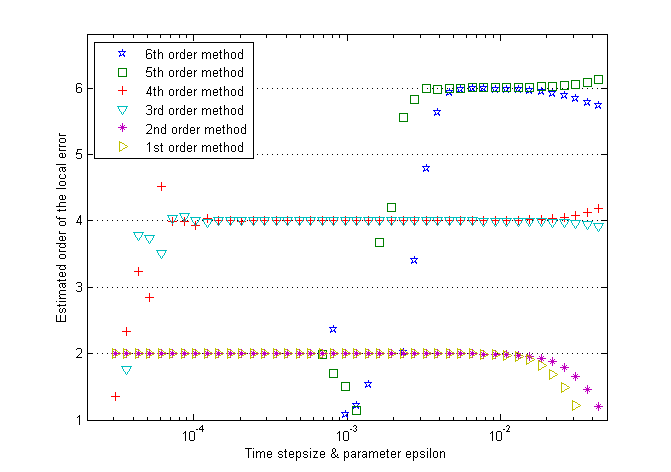}\\
        \includegraphics[width=0.48\textwidth]{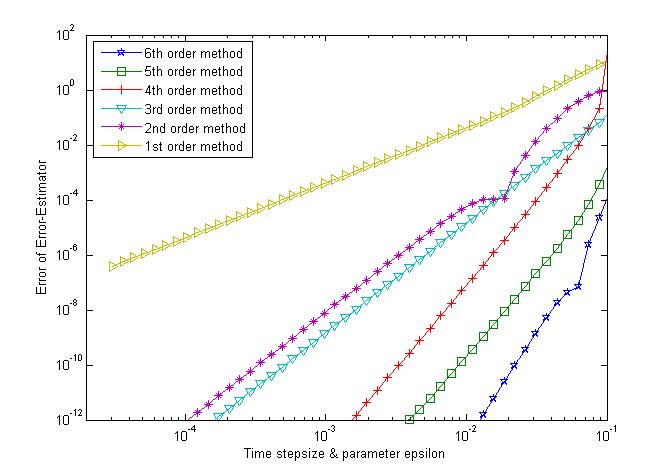}
        \includegraphics[width=0.48\textwidth]{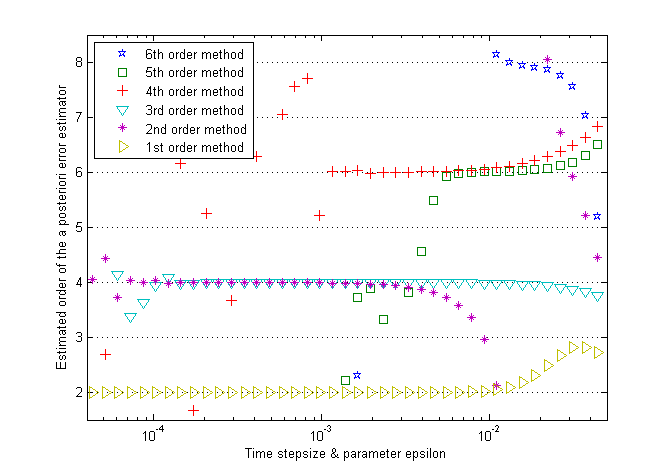}
    	\caption{\textbf{
    $(t=\eps)$\,-\,dependence of the local error and of the deviation of the a~posteriori error estimator.} \textbf{First row:} As in Fig.~\ref{Fig:h_Aposteriori_6methods}, the plot on the left shows the empirical error of different splitting methods, while the plot on the right shows their observed orders.
    It is obvious that here the local order of the even order methods is reduced to $\Order(t^p)$ while the odd order methods are not affected. \textbf{Second row:} Again, the plot on the left shows the empirical deviation of the a~posteriori error estimator for different splitting methods and the plot on the right shows their observed orders. Compared to Fig.~\ref{Fig:h_Aposteriori_6methods}, the odd order methods suffer from an order reduction, while the even order methods show the expected dependence $\Order(t^{p+2})$.
    This advantage can be used to overcome the disadvantage observed in the first row (see~\eqref{eq:aposterioridependence}) even for less regular initial conditions.
    For all computations the initial condition was a shifted Gaussian at $2\cdot10^4$ gridpoints.}
    \label{Fig:h-e_Aposteriori_6methods}
    \end{figure}

\subsection{Defocussing laser beams and soliton solutions: Adaptive integration}
For the following experiment we choose an application where a cubic Schr{\"o}dinger equation
without external potential arises, namely a model involving a self-defocussing laser beam in a nonlinear medium (see~\cite{McDon93}).

The model describes the propagation of a weak intensity beam $\psi(x,y,z)$ in $z$\,-\,direction via
\begin{equation}
\label{eq:laserbeam}
\ii\,\eps \ncor{\tfrac{\partial}{\partial z} \psi(x,y,z) = -\tfrac{1}{2}\,\eps^2\, \big( \tfrac{\partial^2}{\partial x^2} + \tfrac{\partial^2}{\partial y^2}\big) \psi(x,y,z)
 + \vartheta | \psi(x,y,z) |^2\,\psi(x,y,z)\,,}
\end{equation}
where $\eps$ describes the relationship between diffusion and focussing (arising from the nonlinear medium).
For the special initial distribution $\psi(x,y,0)=\tanh(x)$ and $\eps=1$, $\vartheta=1$,
we obtain the solution
\begin{equation*}
\psi(x,y,z) = \tanh(x)\,\ee^{-\ii\,t\,z}\,.
\end{equation*}
We have modified this and constructed a wave similar to a soliton by multiplying a Gaussian by $\tanh$,
which might be more stable under diffusion.
For the results shown in Fig.~\ref{Fig:SoliGauss} we have used the initial conditions
\begin{align*}
\psi_1(x,y,0) &= A\,\exp\big(-\tfrac{x^2+y^2}{r_0^2}\big)\,\tanh\big(\tfrac{y}{y_s}\big)\,,\\
\psi_2(x,y,0) &= A\,\exp\big(-\tfrac{x^2+y^2}{r_0^2}\big)\,.
\end{align*}
Numerical solutions have been obtained at $1000$ spatial gridpoints on the $x$\,- and $y$\,-\,axes and
by a time-adaptive method of order four based
on the a~posteriori local error estimator~\eqref{eq:ErrEst} with a local absolute tolerance~$10^{-8}$.
Comparing the two columns, we indeed see that the $\tanh$ profile
provides a more stable signal than the Gaussian, which diffuses much faster and shows higher oscillations.

\begin{figure}[h!]
	\centering
    \includegraphics[width=0.48\textwidth]{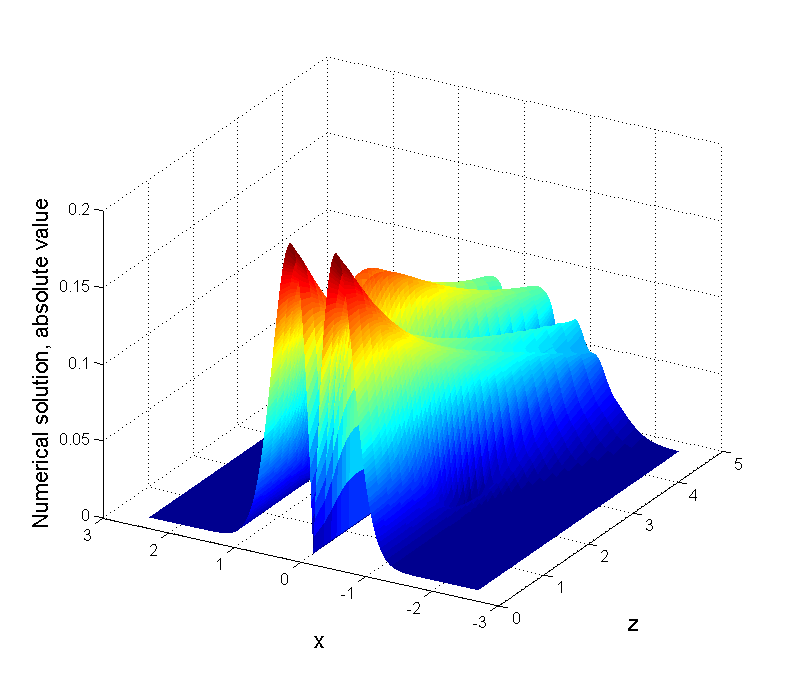}
    \includegraphics[width=0.48\textwidth]{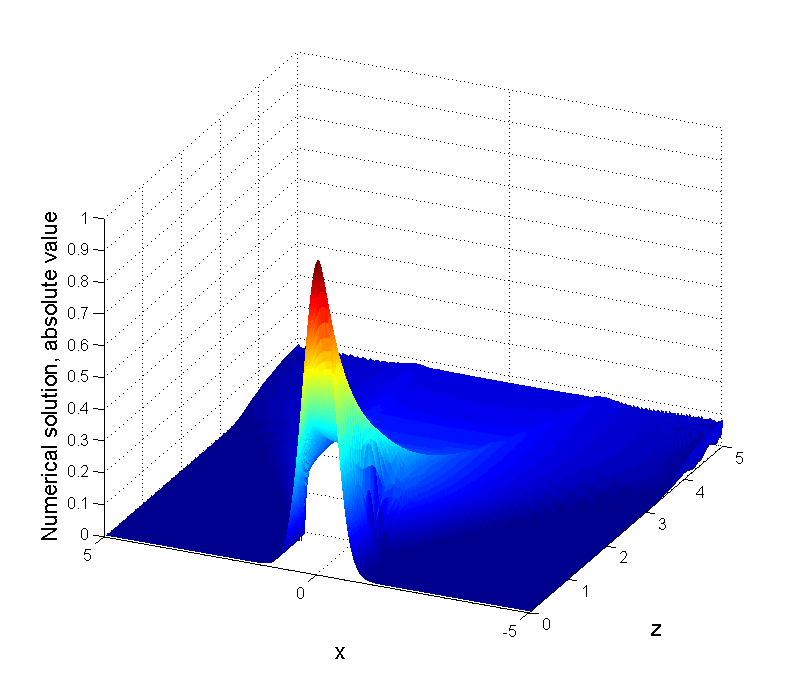}\\
    \includegraphics[width=0.48\textwidth]{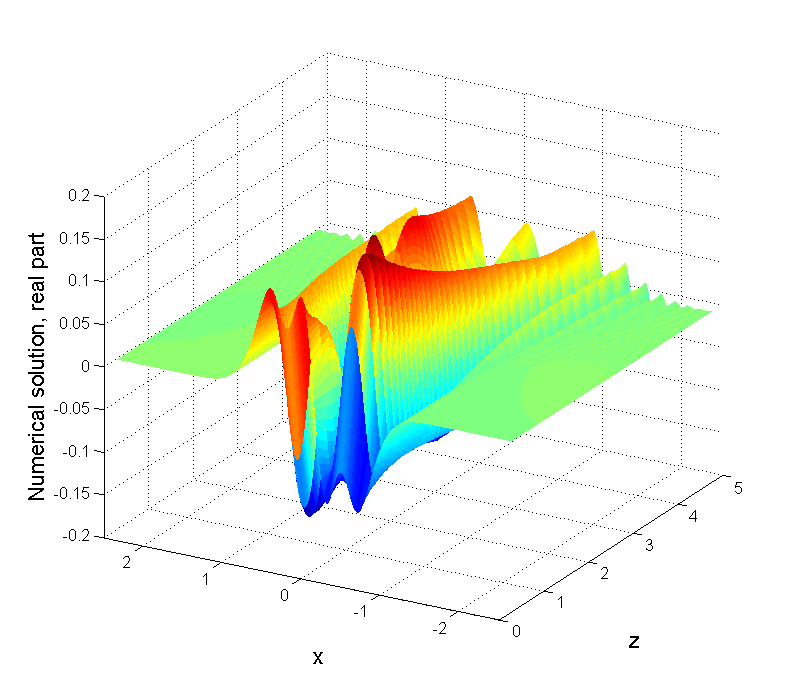}
    \includegraphics[width=0.48\textwidth]{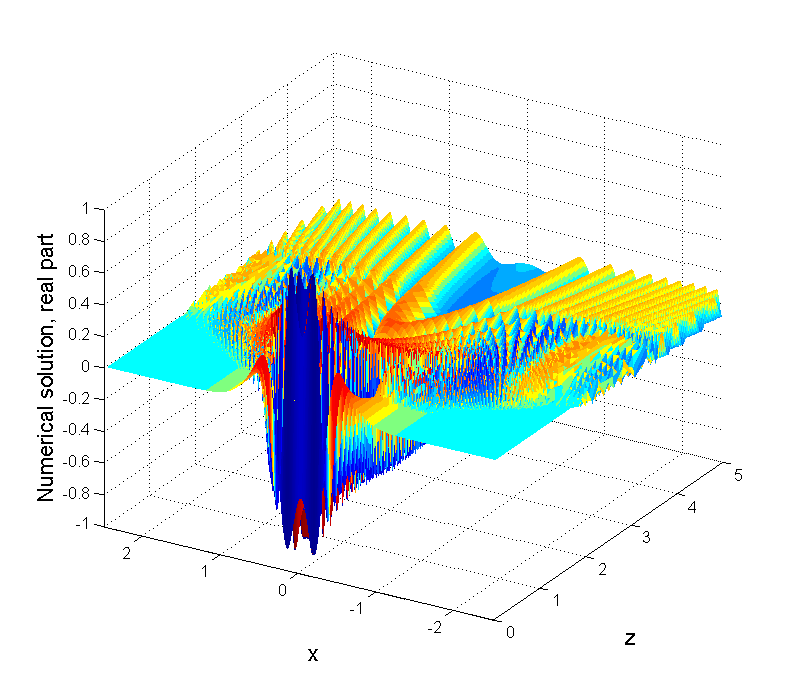}\\
    \includegraphics[width=0.48\textwidth]{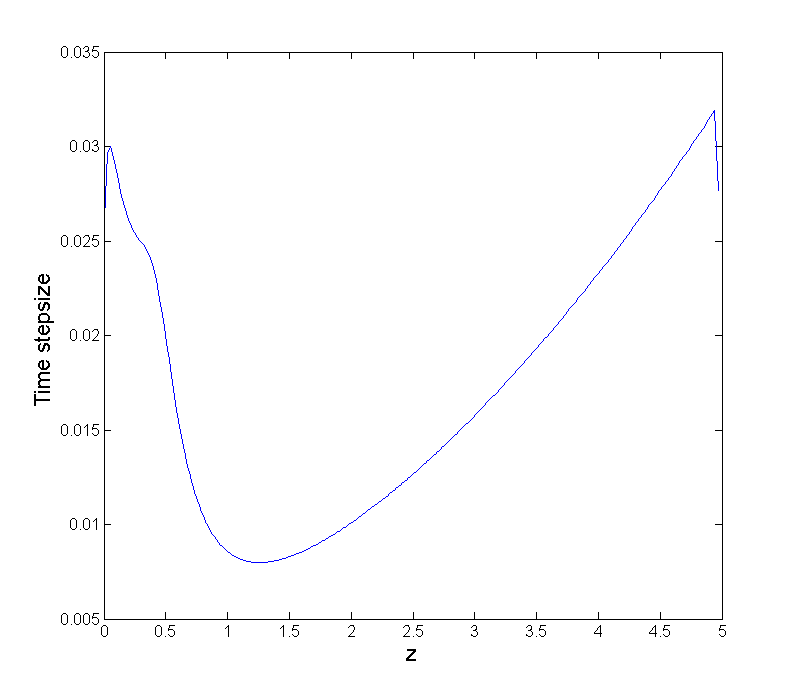}
    \includegraphics[width=0.48\textwidth]{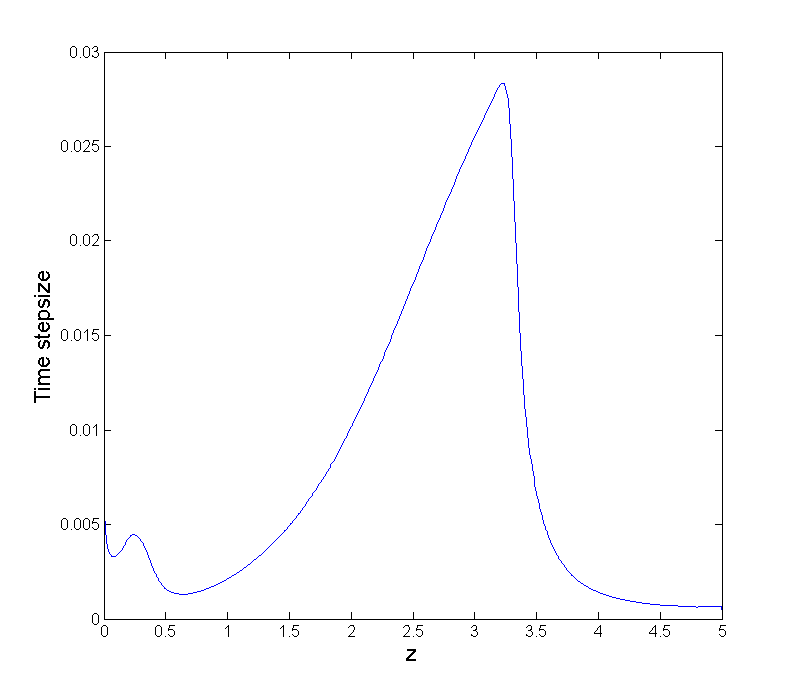}
	\caption{\textbf{Defocussing laser beam with different shapes.}
             Absolute value of beam intensity $\psi$ for problem~\eqref{eq:laserbeam} with $\eps=\tfrac{1}{100}$, $T_{\text{end}}=5$ along the $x$\,- and $z$\,-\,axes for $y=0$.
             In the first two rows we compare the absolute value (first row) and the real part (second row) of the solution. In the third row we display the adaptive stepsizes.
             \textbf{Left column:}
             Modulated Gaussian
             $\psi_1(x,y,0)=\exp(-4(x^2+y^2))\,\tanh(x)$.
             \textbf{Right column:} Gaussian $\psi_2(x,y,0)=\exp(-4(x^2+y^2))$.}
	\label{Fig:SoliGauss}
\end{figure}

%%%%%%%%%%%%%%%%%%%%%%%%%%%%%%%%%%%%%%%%%%%%%%%%%%%%%%%%%%%%%%%%%%
%\clearpage
%%%%%%%%%%%%%%%%%%%%%%%%%%%%%%%%%%%%%%%%%%%%%%%%%%%%%%%%%%%%%%%%%%\FloatBarrier
\section*{\large Appendix}
\appendix
\section{Technical tools}
\label{Technical Tools}

\subsection{Gr{\"o}bner-Alekseev formula (nonlinear variation of constant)}
%The nonlinear variation of constant (NLVOC) or Gr{\"o}bner-Alekseev formula is a multiply used formula for the expansion of the local error.
\begin{proposition}
\label{Grobner}
Given a pair of initial value problems,
\begin{subequations}
\label{eq:GA}
\begin{align}
&\begin{cases}
~z'(t) = G(t,z(t)) = F(z(t)) + r(t,z(t))\,,\qquad 0 \leq t \leq T \label{eq:GA-1} \\
~z(0) = u
\end{cases} \\
&\begin{cases}
~y'(t) = F(y(t))\,,\qquad \qquad \qquad \qquad \qquad  \quad~~  0 \leq t \leq T \label{eq:GA-2} \\
~y(0) = u
\end{cases}
\end{align}
\end{subequations}
the solution $z(t)$ of~\eqref{eq:GA-1} can be expressed
via the nonlinear variation of constant formula
\begin{equation}
\label{eq:NLVOC}
z(t) = \nE_G(t,u) =
y(t) + \int_{0}^{t} \pdd\,\nE_F(t-\t,\nE_G(\t,u))\,r(\t,\nE_G(\t,u))\,\dd\t,\quad 0 \leq t \leq T\,,
\end{equation}
where $y(t)=\nE_F(t,u)$ is the solution of~\eqref{eq:GA-2}.
\end{proposition}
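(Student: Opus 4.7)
The plan is to introduce the auxiliary curve
\[
\phi(\t) := \nE_F(t-\t,\, \nE_G(\t, u)), \qquad 0 \leq \t \leq t,
\]
which interpolates between the unperturbed and perturbed flows. By definition of the evolutionary operators, $\phi(0) = \nE_F(t, u) = y(t)$ and $\phi(t) = \nE_F(0, \nE_G(t, u)) = \nE_G(t,u) = z(t)$, so it suffices to evaluate
\[
z(t) - y(t) = \int_0^t \phi'(\t)\, \dd \t.
\]

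First I would differentiate $\phi$ with respect to $\t$ via the chain rule, yielding
\[
\phi'(\t) = -\pd{1} \nE_F(t-\t,\, \nE_G(\t, u)) + \pdd\, \nE_F(t-\t,\, \nE_G(\t, u))\cdot \patau \nE_G(\t, u).
\]
Here I would invoke two facts about the unperturbed flow: $\pd{1}\nE_F(s,v) = F(\nE_F(s,v))$, which expresses that $\nE_F(\cdot,v)$ solves~\eqref{eq:GA-2}, and the flow identity
\[
\pdd\, \nE_F(s,v)\cdot F(v) = F(\nE_F(s,v)),
\]
which is obtained by differentiating the semigroup property $\nE_F(s,\nE_F(\sig,v)) = \nE_F(s+\sig,v)$ in $\sig$ at $\sig = 0$ and using that $\patau \nE_F(0,v) = F(v)$.

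Inserting $\patau \nE_G(\t,u) = F(\nE_G(\t,u)) + r(\t,\nE_G(\t,u))$ and applying the two identities above to the first and second summands produces a cancellation of the $F$-contributions, leaving
\[
\phi'(\t) = \pdd\, \nE_F(t-\t,\, \nE_G(\t, u))\cdot r(\t, \nE_G(\t, u)).
\]
Integration over $[0,t]$ then yields~\eqref{eq:NLVOC}.

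The main obstacle is the flow identity $\pdd\, \nE_F(s,v)\cdot F(v) = F(\nE_F(s,v))$ in the nonlinear setting: establishing it rigorously requires joint $C^1$ regularity of $\nE_F$ in both arguments, which in turn demands sufficient smoothness of the vector field $F$ on the underlying function space. For the application to the splitting operator $B$ in Section~\ref{sec:GPE}, this is precisely the formal differentiability issue noted in the footnote at the end of Section~\ref{sec:Problem-setting}, which is to be understood in the sense indicated in~\cite{Auz14}.
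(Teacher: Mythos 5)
Your proof is correct and is the standard interpolation-curve derivation of the Gr\"obner--Alekseev formula (essentially Theorem~I.14.5 in the cited reference~\cite{Hair93}); the paper itself offers no proof but merely cites~\cite{Hair93,descombes2013lie}, so this is the argument it intends. Your flagging of the joint $C^1$ regularity of $\nE_F$ needed for the flow identity $\pdd\,\nE_F(s,v)\,F(v)=F(\nE_F(s,v))$ is the right caveat, and it lines up with the Fr\'echet-differentiability remark in the footnote at the end of Section~\ref{sec:Problem-setting}.
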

\begin{proof}
See~\cite{Hair93} or~\cite{descombes2013lie}.
\end{proof}
\subsection{$L^2$ estimates for products of functions} \label{app:A2}
For the following estimates we use H\"{o}lder's inequality and
Sobolev embeddings for estimating products in the $L^2$\,-\,norm,
for spatial dimension $d\in \{1,2,3\}$,
\begin{align*}
\| u\,v \|_{L^2} &\leq \| u \|_{L^4}\,\| v \|_{L^4} \leq C\,\| u \|_{H^1}\,\| v \|_{H^1}\,, \\
\| u\,v \|_{L^2} &\leq  C\,\| u \|_{H^2}\,\| v \|_{L^2}\,, \\
\| u\,v\,w\|_{L^2} &\leq  \| u \|_{L^6}\,\| v \|_{L^6}\,\| w \|_{L^6}
                    \leq C\,\| u \|_{H^1}\,\| v \|_{H^1}\,\| w \|_{H^1}\,, \\
\| u\,v\,w \|_{L^2} &\leq  C\,\| u\,v \|_{H^2}\,\| w \|_{L^2}\,, \\
\| u\,v\,w\,z \|_{L^2} &\leq  C\,\| u\,v\,w\|_{H^2}\,\| z \|_{L^2}\,.
\end{align*}
Since $H^2$ forms an algebra, we can moreover estimate products in $H^2$ as
\begin{equation*}
\| u\,v \|_{H^2}\leq C\,\| u \|_{H^2}\,\| v \|_{H^2}\,.
\end{equation*}

\subsection{$L^2$ estimates for mixed powers of $x$ and $\partial_j u$} \label{app:A3}
The following estimates are based on repeated \ncor{integration by parts} and the inequality of arithmetic and geometric means.
\begin{align*}
\| x_j\, \partial_j\, u\|_{L^2} & \leq C\, \big( \| x^2\, u\|_{L^2} + \| u\|_{H^2}\big)\,,\\
\| x_j^2\, \partial_j\, u\|_{L^2} & \leq C\, \big( \|x^4\, u \|_{L^2}+ \|x^2\, u\|_{L^2} + \|u\|_{H^2}\big)\,,\\
\| x_j\, \partial_j^2\, u \|_{L^2}& \leq C\, \big( \|x^3\, u \|_{L^2} + \|u\|_{H^3}\big),
  \quad \text{or} \quad  \| x_j\, \partial_j^2\, u \|_{L^2} \leq C\, \big( \|x^3\, u \|_{L^2} + \| u \|_{H^3}\big)\,,\\
\bcor{\| x_j^2\, \partial_j^2\, u \|_{L^2}}& \leq C\, \big(\|x^4\, u \|_{L^2} + \|u\|_{H^4}\big)\,.
\end{align*}

\section{Derivation of integral representations for the first- and second-order defect terms} \label{lab:expansiondet}
The integral representations~\eqref{eq:S1integral} and~\eqref{eq:S2integral}
for the first- and second-order defect terms $ \S1(t,u) $ and $ \S2(t,u) $ are related to the analogous
results for the general case, with $ A $ and $ B $ nonlinear, from~\cite{Auz14},
which were derived and verified with the help of computer algebra.
Here we specialize for $ A $ linear and give a rigorous proof,
rearranging terms in a way which is appropriate for the present purpose,
without explicating all technical details.

The idea is to evaluate the defect terms in a way containing no explicit time derivatives.
This results in several subexpressions vanishing at $ t=0 $
and satisfying certain linear evolution equations.
Application of the variation of constant formulas
\begin{subequations}
\label{eq:VOCLIN}
\begin{align}
y'(t) &= \half\,A\,y(t) + r(t)\,, ~~ y(0)=0
\quad \Rightarrow \quad
y(t) %= \int_{0}^{t} \nE_{\shalf A}(t-\t)\,r(\t)\,\dd\t
     = \int_{0}^{t} \EA(\half\,(t-\t))\,r(\t)\,\dd\t\,,\label{eq:VOCLIN-1}  \\
y'(t) &= B'(\EB(t,u))\,y(t) + r(t),\, ~~y(0)=0
\quad \Rightarrow \quad
y(t) =  \int_0^t \pdd\,\EB(t-\t,\EB(\t,u))\,r(\t)\,\dd\t\,. \label{eq:VOCLIN-2}
%y(t) = \int_{0}^{t} \pdd\,\EB(t-\t,v)\,r(\t)\,\dd\t\,. %
\end{align}
\end{subequations}
then yields the desired integral forms.

To explain~\eqref{eq:VOCLIN-2} we note that
\begin{align*}
\begin{aligned}
\pdt{}\pdd\EB(t,u)z &= B'(\EB(t,u))\,\pdd\EB(t,u)z \,,\\
\big(\pdd\EB(t,u)\big)^{-1} &= \pdd\EB(-\t,\EB(\t,u)) \,,\\
\pdd\EB(t,u) \pdd\EB(-\t,\EB(\t,u)) &= \pdd\EB(t-\t,\EB(\t,u))\,.
\end{aligned}
\end{align*}
Hence by the linear variation of constant formula,
\begin{align*}
y(t) &= \pdd \EB(t,u) \int_0^t \big( \pdd \EB(\t,u)\big)^{-1} r(\t)\, \dd \t = \int_0^t \pdd\,\EB(t-\t,\EB(\t,u))\,r(\t)\,\dd\t\,.
\end{align*}

\subsection{The first-order defect $ \S1(t,u) $}\label{sec:S1herleitung}
The intermediate values of a Strang splitting step~\eqref{eq:Strang},
\begin{subequations}
\label{eq:wv_def}
\begin{align}
v &= v(t,u) = \EA(\half\,t)\,u\,, \\
w &= w(t,u) = \EB(t,v)\,,
\end{align}
\end{subequations}
(such that $ \nS(t,u) = \EA(\half\,t,w) $) satisfy
\begin{align*}
\pdt{} v &= \half\,A\,v\,, \\
\pdt{} w &= B(w) + \pdd\,\EB(t,v)\,\pdt{} v = B(w) + \half\,\pdd\,\EB(t,v)\,A\,v\,.
\end{align*}
Thus,
\begin{align*}
\pdt{} \nS(t,u)
&= \pdt{} \big(\EA(\half\,t)\,w \big) \\
&= \half\,A\,\nS(t,u) + \EA(\half\,t,w)\,\pdt{} w \\
&= \half\,A\,\nS(t,u) + \EA(\half\,t)\,B(w) + \half\,\EA(\half\,t)\,\pdd\,\EB(t,v)\,A\,v\,,
\end{align*}
and for the defect $ \S1 $ this gives
\begin{align*}  % Hier wird fundamental identity nicht vewendet.
\S1(t,u)
&= \pdt{} \nS(t,u) - A\,\nS(t,u) - B(\nS(t,u)) \\
&= \EA(\half\,t)\,B(w) + \half\,\EA(\half\,t)\,\pdd\,\EB(t,v)\,A\,v
   - \half\,A\,\nS(t,u) - B(\nS(t,u)) \\
&= \EA(\half\,t)\,B(w) + \half\,\EA(\half\,t)\,\pdd\,\EB(t,v)\,A\,v
   - \half\,A\,\EA(\half\,t)\,w - B(\EA(\half\,t)\,w) \\
&= \EA(\half\,t)\,B(w) + \half\,\EA(\half\,t)\,\pdd\,\EB(t,v)\,A\,v
   - \half\,\EA(\half\,t)\,A\,w - B(\EA(\half\,t)\,w)\,,
\end{align*}
which can be written in the form
\begin{subequations}
\label{eq:S1expr-herleitung}
\begin{align}
\S1(t,u)
&= \EA(\half\,t)\,B(w) - B(\EA(\half\,t)\,w) \label{eq:S1-line1} \\
& \quad {} + \half\,\EA(\half\,t)\big(\pdd\,\EB(t,v)\,A\,v - A\,\EB(t,v) \big)\,. \label{eq:S1-line2}
\end{align}
\end{subequations}
In order to find an integral representation for $ \S1(t,u) $, we separately consider the
terms~\eqref{eq:S1-line1} and~\eqref{eq:S1-line2}, with $ v $ and $ w $ fixed.
Differentiating with respect to $ t $ we find that they
satisfy the following evolution equations.
\begin{subequations}
\begin{description}
\item[\eqref{eq:S1-line1}:]
$ \S1_{(a)}(t) = \EA(\half\,t)\,B(w) - B(\EA(\half\,t)\,w) $ satisfies $ \S1_{(a)}(0)=0 $, and
\begin{equation} \label{eq:S1-line1-de}
\pdt{} \S1_{(a)}(t) = \half\,A\,\S1_{(a)}(t) +\half [A,B](\EA(\half\,t)\,w)\,.
\end{equation}
\end{description}
In~\eqref{eq:S1-line2}, we consider the expression within $ \big(\ldots \big) $:
\begin{description}
\item[\eqref{eq:S1-line2}:]
$ \S1_{(b)}(t) = \pdd\,\EB(t,v)\,A\,v - A\,\EB(t,v) $ satisfies $ \S1_{(b)}(0)=0 $, and
\begin{equation} \label{eq:S1-line2-de}
\pdt{} \S1_{(b)}(t) = B'(\EB(t,v))\,\S1_{(b)}(t,v) + [B,A](\EB(t,v))\,.
\end{equation}
\end{description}
\end{subequations}
Finally, applying~\eqref{eq:VOCLIN-1} and~\eqref{eq:VOCLIN-2}, recombination and
substituting $ v = \EA(\shalf t)\,u $,\, $ w = \EB(t,\EA(\shalf t)\,u) $
leads to the integral representation~\eqref{eq:S1integral} for $\S1(t,u) $.

\subsection{The second-order defect $ \S2(t,u) $}\label{sec:S2herleitung}
To evaluate $\S2$ defined in~\eqref{eq:S2def}, we proceed in an analogous way as for $ \S1 $, with $v$, $w$ defined in~\eqref{eq:wv_def}.\\
We start by differentiating the expression for $\S1$ from~\eqref{eq:S1expr}
with respect to $t$,
\begin{align*}
 \pdt{}\S1(t,u) &= \big(A + B'(\nS(t,u)) \big)\,\S1(t,u) \\
& \quad {} - B'(\EA(\half\,t)\,w)\,\EA(\half\,t)\,\pdd\,\EB(t,v)\,A\,v
           + \EA(\half\,t)\,B'(w)\,\pdd\,\EB(t,v)\,A\,v \\
& \quad {} - A\,\EA(\half\,t)\,B(w) + A\,B(\EA(\half\,t)\,w)
           - \half\,A\,\EA(\half\,t)\,\pdd\,\EB(t, v)\,A\,v\\
& \quad {} + \tfrac{1}{4}\,\EA(\half\,t)\,\pdd\,\EB(t,v)\,A^2\,v
           + \tfrac{1}{4}\,A^2\,\EA(\half\,t)\,w \\
& \quad {} + \tfrac{1}{4}\,\EA(\half\,t)\,\pdd^2\,\EB(t,v)(A\,v,A\,v)
           + \EA(\half\,t)\,B'(w)\,B(w) \\
& \quad {} + B'(\EA(\half\,t)\,w)\,B(\EA(\half\,t)\,w)
           - 2\,B'(\EA(\half\,t)\,w)\,\EA(\half\,t)\,B(w) \,.
\end{align*}
Subtracting $ F'(\nS(t,u))\,\S1(t,u) = (A+B'(\nS(t,u)))\,\S1(t,u)$ and rearranging terms yields
\begin{subequations}
\label{eq:S2}
\begin{align}
\S2(t,u)
&= \big(\EA(\half\,t)\,B'(w) - B'(\EA(\half\,t)\,w)\,\EA(\half\,t) \big)\,\pdd\,\EB(t,v)\,A\,v \label{eq:S2-line1} \\
& \quad {} + \big(A + B'(\EA(\half\,t)\,w)\big)\,\big(B(\EA(\half t)\,w) - \EA(\half t)\,B(w) \big) \label{eq:S2-line2} \\
& \quad {} + \big(\EA(\half\,t)\,B'(w) - B'(\EA(\half\,t)\,w)\,\EA(\half\,t) \big)\,B(w) \label{eq:S2-line3}\\
& \quad {} + \tfrac{1}{4}\,\EA(\half\,t)
            \Big(A \big(A\,\EB(t,v) - \pdd\,\EB(t,v)\,A\,v \big)        \label{eq:S2-line4} \\
& \qquad\qquad\qquad\quad {} - \big(A\,\pdd\,\EB(t,v) - \pdd\,\EB(t,v)\,A \big)\,A\,v \label{eq:S2-line5} \\
& \qquad\qquad\qquad\quad {} + \pdd^2\,\EB(t,v)(A\,v,A\,v)  \Big) \label{eq:S2-line6}\,.
\end{align}
\end{subequations}
Evaluation of $\S2(t,u)$ at $t=0$ shows $\S2(0,u)=0$, hence $\S2(t,u)=\Order(t)$.

In order to find an integral representation for $ \S2(t,u) $, we separately consider the
terms~\eqref{eq:S2-line1}--\eqref{eq:S2-line6},
with $ v $, $w$ and $ z :=\pdd\,\EB(t,v)\,A\,v $ fixed.
Differentiating with respect to $ t $ we find that they satisfy the following linear evolution equations.
\begin{subequations}
\begin{description}
\item[\eqref{eq:S2-line1}:]
$ \S2_{(a)}(t) = \big(\EA(\half\,t)\,B'(w) - B'(\EA(\half\,t)\,w)\,\EA(\half t) \big) z $
satisfies $ \S2_{(a)}(0)=0 $, and
\begin{align} \label{eq:S2-line1-de}
\pdt{} \S2_{(a)}(t)
&= \half\,A\,\S2_{(a)}(t) \\
& \quad {} + \half\,[A,B'(\EA(\half\,t)\,w)] A\,\EA(\half\,t) w + \half\,[A,B'(\EA(\half\,t)\,w)]\,\EA(\half\,t)\,\big(z -A\,w  \big) \notag \\
& \quad {} - \half\,B''(\EA(\half\,t)\,w)\,
                \big(A\,\EA(\half\,t)\,w,
                      \EA(\half\,t)\,(z- A\,w+ A\,w) \big)\,. \notag
\end{align}
% --equivalent original version:--
%\begin{align*}
%\eqref{eq:S2-line1}:
%&&&             \pdt{} \Big(\big(\EA(\half\,t)\,B'(w) - B'(\EA(\half\,t)\,w)\,\EA(\half t) \big)\,\pdd\,\EB(t,v)\,A\,v \Big) \\
%&&&\quad {} = \half\,A \Big(\big(\EA(\half\,t)\,B'(w) - B'(\EA(\half\,t)\,w)\,\EA(\half t) \big)\,\pdd\,\EB(t,v)\,A\,v \Big) \\
%&&&\qquad {} - \half\,[B'(\EA(\half\,t)\,w),A]\,\EA(\half\,t)\,\pdd\,\EB(t,v)\,A\,v
%             + B''(\EA(\half\,t,w)\big(\half\,A\,\EA(\half\,t)\,w,\EA(\half\,t)\,\pdd\,\EB(t,v)\,A\,v \big) \\
%&&&\quad {} = \half\,A \Big(\big(\EA(\half\,t)\,B'(w) - B'(\EA(\half\,t)\,w)\,\EA(\half t) \big)\,\pdd\,\EB(t,v)\,A\,v \Big) \\
%&&&\qquad {} - \half\,[B'(\EA(\half\,t)\,w),A]\,A\,\EA(\half\,t)\,w
%             + \half\,[B'(\EA(\half\,t)\,w),A]\,\EA(\half\,t)\,\big(A\,\EB(t,v) - \pdd\,\EA(t,v)\,A\,v \big) \\
%&&& \qquad {} + \half\,B''(\EA(\half\,t)\,w)\,
%                \Big(A\,\EA(\half\,t)\,w,
%                      \EA(\half\,t)\,\big(\pdd\,\EB(t,v)\,A\,v + {\color{red}A\,\EB(t,w) - A\,\EB(t,w)} \big) \Big)\,,
%\end{align*}
\item[\eqref{eq:S2-line2}:]
$ \S2_{(b)}(t) = \big(A + B'(\EA(\half\,t)\,w)\big)\,\big(B(\EA(\half t)\,w) - \EA(\half t)\,B(w) \big) $
satisfies $ \S2_{(b)}(0)=0 $, and
\begin{align} \label{eq:S2-line2-de}
\pdt{} \S2_{(b)}(t)
&= \half\,A\,\S2_{(b)}(t) \\
& \quad {} + \half\,A\,[B\,,A]\,\EA(\half\,t)\,w + \half\,B'(\EA(\half\,t)\,w)\,[B\,,A]\,\EA(\half\,t)\,w \notag \\
& \quad {} - \half\,B''(\EA(\half\,t)\,w)\big(A\,\EA(\half\,t)\,w,\EA(\half\,t)\,B(w) -B(\EA(\half\,t)\,w) \big) \notag \\
& \quad {} + \half\,[A,B'(\EA(\half\,t)\,w)]\,\big(\EA(\half\,t)\,B(w) -  B(\EA(\half\,t)\,w)  \big)\,. \notag
\end{align}
% -- equivalent original version:--
%\begin{align*}
%\eqref{eq:S2-line2}:
%&&& \pdt{} \big(A + B'(\EA(\half\,t)\,w)\big)\,\big(B(\EA(\half t)\,w) - \EA(\half t)\,B(w) \big) \\
%&&&\quad = \half A \big(A+ B'(\EA(\half\,t)\,w)\big)\cdot \Big(B(\EA(\half t)\,w) -\EA(\half t) B(w)\Big) \\
%&&& \qquad + \half A [B(\EA(\half\,t)\,w) ,A] \EA(\half\,t)\,w +\half B'(\EA(\half\,t)\,w)\cdot  [B(\EA(\half\,t)\,w), A] \EA(\half\,t)\,w \\
%&&& \qquad + \half B''(\EA(\half\,t)\,w)\cdot  \Big(A \EA(\half\,t)\,w,B (\EA(\half\,t)\,w) -\EA(\half\,t) B(w)\Big) \\
%&&& \qquad +\half [B'(\EA(\half\,t)\,w), A]\cdot  \Big(B (\EA(\half\,t)\,w) -\EA(\half\,t)B(w) \Big)\,,
%\end{align*}
\item[\eqref{eq:S2-line3}:]
$ \S2_{(c)}(t) = \big(\EA(\half\,t)\,B'(w) - B'(\EA(\half\,t)\,w)\,\EA(\half t) \big)\,B(w) $,
satisfies $ \S2_{(c)}(0)=0 $, and
\begin{align} \label{eq:S2-line3-de}
\pdt{} \S2_{(c)}(t)
&= \half\,A\,\S2_{(c)}(t) %\\
%& \quad {}
                       + \half\,[A,B'(\EA(\half\,t)\,w)]\,\EA(\half\,t)\,B(w) \\ %\notag \\
& \qquad\qquad\qquad {} - \half\,B''(\EA(\half\,t)\,w)\big(A\,\EA(\half\,t)\,w,\EA(\half\,t)\,B(w) \big)\,. \notag
\end{align}
% -- equivalent original version:--
%\begin{align*}
%\eqref{eq:S2-line3}:
%&&& \pdt{} \big(\EA(\half\,t) B'(w) -B'(\EA(\half\,t)\,w) \EA(\half t)\big)\cdot  B(w) \\
%&&& \quad= \half A \big(\EA(\half\,t) B'(w) - B'(\EA(\half\,t)\,w) \EA(\half t)\big)\cdot  B(w) -\half [B'(\EA(\half\,t)\,w),A] \EA(\half\,t) B(w) \\
%&&& \quad -\half B''(\EA(\half\,t)\,w)\cdot \big(A \EA(\half\,t)\,w,\EA(\half\,t) B(w) \big)\,.
%\end{align*}
\end{description}
In~\eqref{eq:S2-line4}--\eqref{eq:S2-line6}, we consider the expressions within %the large bracket
$ \big(\ldots \big) $:
\begin{description}
\item[\eqref{eq:S2-line4}:]
$ \S2_{(d)}(t) = A \big(A\,\EB(t,v) - \pdd\,\EB(t,v)\,A\,v \big) $
satisfies $ \S2_{(d)}(0)=0 $, and
\begin{align} \label{eq:S2-line4-de}
\pdt{} \S2_{(d)}(t)
&= B'(\EB(t,v))\big(\S2_{(d)}(t) \big) \\
& \quad {} + [A,[A,B]](\EB(t,v)) \notag \\
& \quad {} + 2\,[A,B'(\EB(t,v))]\,A\,\EB(t,v) - [A,B'(\EB(t,v))]\,\pdd\,\EB(t,v)\,A\,v \notag \\
& \quad {} - B''(\EB(t,v))\big(A\,\EB(t,v),A\,\EB(t,v) \big)\,. \notag
\end{align}
% -- equivalent original version:--
%\begin{align*}
%\eqref{eq:S2-line4}:&&&  \pdt{} \big(A\big(A(\EB(t,v)) - \pdd\,\EB(t,v)\,A\,v\big)\big) = A^2\,B(\EB(t,v)) - A\,B'(\EB(t,v)) \pdd\,\EB(t,v)\,A\,v \\
%&&&\quad = B'(\EB(t,v)) \cdot \big(A\big(A(\EB(t,v)) - \pdd\,\EB(t,v)\,A\,v\big)\big) - B'(\EB(t,v))\,A^2\EB(t,v)\\
%&&& \qquad + B'(\EB(t,v))\,A \pdd\,\EB(t,v)\,A\,v+A^2\,B(\EB(t,v)) - A\,B'(\EB(t,v)) \pdd\,\EB(t,v)\,A\,v \\
%&&&\quad= B'(\EB(t,v)) \cdot \big(A\big(A(\EB(t,v))- \pdd\,\EB(t,v)\,A\,v\big)\big) + [A,[A,B]](\EB(t,v))\\
%&&& \qquad  + 2 A\,B'(\EB(t,v))\,A\EB(t,v) - 2 B'(\EB(t,v))A^2\EB(t,v) \\
%&&&\qquad + B'(\EB(t,v))\,A \pdd\,\EB(t,v)\,A\,v- A\,B'(\EB(t,v)) \pdd\,\EB(t,v)\,A\,v \\
%&&&\qquad - B''(\EB(t,v)) (A\EB(t,v), A\EB(t,v))\\
%&&&\quad=B'(\EB(t,v)) \cdot \big(A \big(A(\EB(t,v)) - \pdd\,\EB(t,v)\,A\,v\big)\big) + [A,[A,B]](\EB(t,v))\\
%&&&\qquad + 2[A,B'(\EB(t,v))] A(\EB(t,v)) - [A,B'(\EB(t,v))]\pdd\,\EB(t,v)\,A\,v \\
%&&&\qquad - B''(\EB(t,v))\cdot  (A\EB(t,v), A\EB(t,v))\,,
%\end{align*}
\item[\eqref{eq:S2-line5}:]
$ \S2_{(e)}(t) = \big(A\,\pdd\,\EB(t,v) - \pdd\,\EB(t,v)\,A \big)\,A\,v $
satisfies $ \S2_{(e)}(0)=0 $, and
\begin{equation} \label{eq:S2-line5-de}
\pdt{} \S2_{(e)}(t)
= B'(\EB(t,v))\big(\S2_{(e)}(t) \big) + [A,B'(\EB(t,v))]\,\pdd\,\EB(t,v)\,A\,v\,.
\end{equation}
% -- equivalent original version:--
%\begin{align*}
%\eqref{eq:S2-line4}:
%&&& \pdt{} \big((A \pdd\,\EB(t,v) - \pdd\,\EB(t,v)\,A)A\,v\big)
%                = A\,B'(\EB(t,v)) \pdd\,\EB(t,v)A{\color{red}v} - B'(\EB(t,v)) \pdd\,\EB(t,v)\,A^2\,v \\
%&&&\quad = B'(\EB(t,v))\cdot  \big((A \pdd\,\EB(t,v) - \pdd\,\EB(t,v)\,A)A\,v\big) + [A,B'(\EB(t,v))] \pdd\,\EB(t,v)A\,v\,,
%\end{align*}
\item[\eqref{eq:S2-line6}:]
$ \S2_{(f)}(t) = \pdd^2\,\EB(t,v)(A\,v,A\,v) $
satisfies $ \S2_{(f)}(0)=0 $, and
\begin{equation} \label{eq:S2-line6-de}
\pdt{} \S2_{(f)}(t)
= B'(\EB(t,v))\big(\S2_{(f)}(t) \big) + B''(\EB(t,v)) \big(\pdd\,\EB(t,v)\,A\,v,\pdd\,\EB(t,v)\,A\,v \big)\,.
\end{equation}
% -- equivalent original version:--
%\begin{align*}
%\eqref{eq:S2-line5}:&&& \pdt{} \pdd^2\,\EB(t,v)(A\,v,A\,v) =B'(\EB(t,v))\cdot \big(\pdd^2\,\EB(t,v)(A\,v,A\,v) \big)+ B''(\EB(t,v))\cdot \big(\pdd\,\EB(t,v)A\,v,\pdd\,\EB(t,v)A\,v\big)\,.
%\end{align*}
\end{description}
\end{subequations}
Finally, applying~\eqref{eq:VOCLIN-1} and~\eqref{eq:VOCLIN-2}, respectively, recombination and
substituting $ v = \EA(\shalf t)\,u $,\, $ w = \EB(t,\EA(\shalf t)\,u) $ and $z=\pdd\,\EB(t,v)\,A\,v$
leads to the integral representation~\eqref{eq:S2integral} for $\S2(t,u) $.

\section{Auxiliary estimates for the NLS case} \label{lab:gronwall}

\subsection{Estimate of $ \big\| \pdd\,\nE_F(\t_1 -\t_2,\nS(\t_2,u)) \cdot \S2(\t_2,u) \big\|_{L^2}$ } \label{C1}
For a detailed study of the estimate~\eqref{eq:pdEF}, we need to estimate
the arising expressions in $\L{2}(t,u)$ as
\begin{equation*}
\|\pdd\,\nE_F(t-\t_2,\nS(\t_2,u)) \S2(\t_2,u) \|_{L^2}
\leq C_1 +  C_2 \cdot \| \S2(\t_2,u)\|_{L^2}\leq C_1 +  C_2 \cdot \t_2\,\sup_{0 \leq \t_3 \leq \t_2}\| \s2(\t_2,\t_3,u)\|_{L^2} \,,
\end{equation*}
with constants $C_1,\,C_2$ resulting from Gronwall estimates.

We substitute
\begin{align*}
g=\S2(\t_2,u)\,, \quad w=\nS(\t_2,u)
\end{align*}
and apply the linear variation of constant formula in the following way,
\begin{align*}
\pdt{} \pdd\,\nE_F(t-\t_2,w)\,g
&= F'(\nE_F(t-\t_2,w)) \pdd\,\nE_F(t-\t_2,w)\,g \\
&= A\,\pdd\,\nE_F(t-\t_2,w)\,g + B'(\nE_F(t -\t_2,w))\,\pdd\,\nE_F(t-\t_2,w) g\,, \\
\pdd\,\nE_F(t-\t_2,w)\,\Big|_{t=\t_2} g &= g\,, \\
\Rightarrow \qquad
\pdd\,\nE_F(t-\t_2,w)\,g &= \EA(t-\t_2)\,g
+ \int_{\t_2}^{t} \EA(t-\theta)\,B'(\nE_F(\theta-\t_2,w))\,\pdd\,\nE_F(\theta-\t_2,w)\,g\,\dd\theta\,.
\end{align*}
Hence,
\begin{align*}
\| \pdd\,\nE_F(t-\t_2,w)\,g \|_{L^2}
 &\leq \| g \|_{L^2} + \int_{\t_2}^{t} \| B'(\nE_F(\theta-\t_2,w))\,
                                        \pdd\,\nE_F(\theta-\t_2,w)\,g \|_{L^2}\,\dd\theta \\
& \leq \| g \|_{L^2} + \int_{\t_2}^{t} \tfrac{1}{\eps}\,\| U\,\pdd\,\nE_F(\theta-\t_2,w)\,g \|_{L^2}\,\dd\theta\\
&\qquad   + \int_{\t_2}^{t} \tfrac{1}{\eps}\,\tilde{C}\,\th\,\|\nE_F(\theta-\t_2,w) \|_{H^2}^2 \|\,
                        \,\pdd\,\nE_F(\theta-\t_2,w)\,g \|_{L^2}\,\dd\theta\,.
\end{align*}
Applying a Gronwall argument we obtain
\begin{align*}
\| \pdd\,\nE_F(t-\t_2,w)\,g \|_{L^2} &\leq  \exp\Big(\tilde{C} \int_{\t_2}^{t} \tfrac{1}{\eps}\,|\th|\,\|\nE_F(\sig-\t_2,u) \|_{H^2}^2\, \dd\sig \Big)\\
& \qquad \quad {} \cdot \Big( \| g\|_{L^2} + \int_{\t_2}^{t} \tfrac{1}{\eps}\,\| U\,\pdd\,\nE_F(\theta-\t_2,w)\,g \|_{L^2}\,\dd\theta \Big)\,.
\end{align*}%
Now the question is how to argue a reasonable a priori estimate
for $ \tfrac{1}{\eps}\,\| U\,\pdd\,\nE_F(\theta-\t_2,w)\,g \|_{L^2}$.
In the following this is accomplished by relating this term to a known estimate for $\|U\, \EF(t,u)\|_{L^2}$, see~\cite{carles03}.

Considering the Fr{\'e}chet derivative of $\EF(\theta-\t_2,w+g)$ with a small increment $g$, ${\|g\|_{L^2}\leq \delta \| \EF(\theta-\t_2,w+g)\|}$ and $\| \EF(\theta-\t_2,w+g)\|_{L^2} = \| \EF(\theta-\t_2,w)\|_{L^2}+ \Order(\|g\|_{L^2})$,
\begin{align*}
\EF(\theta-\t_2,w+g) = \EF(\theta-\t_2,w)+\pdd\EF(\theta-\t_2,w)(g)+ \Order(\|g\|_{L^2}^2)\,,
\end{align*}
we obtain
\begin{align*}
U \cdot \big(\pdd\EF(\theta-\t_2,w)(g)\big) + \Order(\| g\|^2) & = U\cdot \EF(\theta-\t_2,w+g) - U \cdot \EF(\theta-\t_2,w)\,,\\
\|U\cdot\big(\pdd\EF(\theta-\t_2,w)(g)\big)\|_{L^2} & \leq \| U \cdot \EF(\theta-\t_2,w) - U \cdot \EF(\theta-\t_2,w+g)\|_{L^2} +\Order(\| g\|_{L^2}^2)\,,\\
\|U\cdot\big(\pdd\EF(\theta-\t_2,w)(g)\big)\|_{L^2} & \leq \| U \cdot \EF(\theta-\t_2,w)\|_{L^2} + \| U \cdot \EF(\theta-\t_2,w+g)\|_{L^2} + \Order(\|g\|_{L^2}^2)\,,
\end{align*}%
where the size of the increment $g=\S2(t,u)$ becomes negligible for sufficiently small choice of $t$.
Applying~\cite[pp. 532\,sqq.]{carles03} allows to bound $U \nE_F$ in $L^2$ by a constant $\frac{1}{2}\,C_{\ast}$, which depends on $\nE_F$. Altogether, we obtain the crude estimate
\begin{align}
\label{eq:CU-def}
\bcor{\sup_{\t_2\leq \theta\leq t}\| U \pdd\EF(\theta-\t_2,w)g\|_{L^2}\leq C_{\ast}} \,.
\end{align}%
Actually, the above derivation lets us expect that $C_\ast$ contains a factor $t$.
However, we have not been able to prove this in a rigorous way.

Altogether we obtain~\eqref{eq:pdEF},
\begin{equation}
\label{eq:Gronwall1}
\| \pdd\,\nE_F(t -\t_2,\nS(\t_2,u))\,g \|_{L^2}
\leq \exp\Big(\int_{\t_2}^{t} \tfrac{1}{\eps}\,\tilde{C}\,|\th|\,\|\nE_F(\sig-\t_2,\nS(\t_2,u))\|_{H^2}^2\,\dd \sig \Big)
\big( \| g\|_{L^2} + \tfrac{t}{\eps}\,C_\ast \big)\,.
\end{equation}
A similar result can be obtained in the $H^2$\,-\,norm,
\begin{equation}
\label{eq:GronwallH2}
\| \pdd\,\nE_F(t-\t_2,u)g\|_{H^2}
\leq \exp\Big(\tilde{C} \int_{\t_2}^{t} \tfrac{1}{\eps}\,|\th|\,\|\nE_F(\sig-\t_2,u) \|_{H^2}^2\, \dd\sig \Big)
\big( \| g\|_{H^2} + \tfrac{t}{\eps}\,C_\ast\big)\,,
\end{equation}
with a constant $\tilde{C}_\ast$ such that
\begin{align}
\label{eq:CUhat-def}
 \bcor{\sup_{\t_2\leq \theta\leq t}\| U \pdd\EF(\theta-\t_2,u)g\|_{H^2}\leq \tilde{C}_\ast}\,.
\end{align}
For simplicity of denotation, let \ccor{$ C_\ast $} be defined as the maximum of the constants
appearing in~(\ref{eq:CU-def}) and~\ref{eq:CUhat-def}).
In this sense the estimates from this section enter the local error estimates in Section~\ref{Lestim}.

\subsection{Estimate of $ \big\| \pdd^2 \nE_F(t-\t_2,\nS(\t_2,u))\big( (\S1(\t_2,u) \big)^2 \big\|_{L^2}$} \label{lab:gronwall-1}
For the estimate of $\| \pdd^2 \nE_F(t-\t_2,\nS(\t_2,u))\big(\S1(\t_2,u) \big)^2 \|_{L^2}$ in~\eqref{eq:pd2EF}, we proceed in a similar way as for~\eqref{eq:Gronwall1} with the help of the identity
\begin{align*}
\pdt{} \pdd^2\EF(t,u)\big(v,w\big) &= F''(\EF(t,u))\big(\pdd \EF(t,u)v,\pdd\EF(t,u)w\big) + F'(\EF(t,u))\pdd^2\EF(t,u)\big(v,w\big)\\
& =A\, \pdd^2\EF(t,u)\big(v,w\big) +  B''(\EF(t,u))\big(\pdd \EF(t,u)v,\pdd\EF(t,u)w\big)\\
& \quad  + B'(\EF(t,u))\pdd^2\EF(t,u)\big(v,w\big) \,,
\end{align*}
where
\begin{align*}
B''(u)(v,w) &= -2\,\ii\,\tfrac{1}{\eps}\th\,\big(\ol{u}\,v\,w + u\,\ol{v}\,w + u\,v\,\ol{w} \big)
\end{align*}
does not depend on $U$.

Again we can apply the variation of constant formula and obtain, with the help of Sobolev embeddings and~\eqref{eq:GronwallH2},
\begin{align*}
&\| \pdd^2 \nE_F (t-\t_2,\nS(\t_2,u)) (\S1(\t_2,u),\S1(\t_2,u))  \|_{L^2}   \\
& \quad {} \leq \exp \Big(C \int_{\t_2}^{t} \tfrac{1}{\eps}\,|\th|\,\|\nE_F(\sig-\t_2,\nS(\t_2,u)) \|_{H^2}^2\,\dd\sig \Big)\\
&\qquad \qquad {} \cdot \Big( \hat{C}\,\tfrac{t^3}{\eps}\,\| u \|_{L^2}
\big( \sup_{0 \leq \t_3 \leq \t_2} \| \s1(\t_2,\t_3,u) \|_{H^2}
+ \tfrac{1}{\eps}\,C_\ast \big)^2 + \tfrac{t^2}{\eps}\,C_\ast\Big)\,,
\end{align*}
for some constants $C$ and $\hat{C}$  depending on the Sobolev imbedding of $H^2$ in $L^2$.

%%%%%%%%%%%%%%%%%%%%%%%%%%%%%%%%%%%%%%%%%%%%%%%%%%%%%%%%%%%%%%%%%%%%%%%%%%%%%%
%%%%%%%%%%%%%%%%%%%%%%%%%%%%%%%%%%%%%%%%%%%%%%%%%%%%%%%%%%%%%%%%%%%%%%%%%%%%%%
\end{document}